\documentclass[a4paper,11pt]{article}
\usepackage{lineno}
\usepackage{amsmath}
\usepackage{graphicx}
\usepackage{setspace}
\usepackage{subfigure}
\usepackage{epsfig}
\usepackage{amsfonts}
\usepackage{natbib}
\usepackage{color}
\usepackage{epstopdf}
\usepackage{xcolor}
\usepackage{colortbl}
\usepackage{multirow}
\usepackage{booktabs}
\usepackage{lscape}
\usepackage{enumerate}
\usepackage{natbib}
 \bibpunct[, ]{(}{)}{,}{a}{}{,}%
\usepackage[a4paper,top=1in,bottom=1in,left=1in,right=1in]{geometry}
\usepackage{soul}
\usepackage{authblk}

\onehalfspace

\newtheorem{definition}{Definition}[section]
\newtheorem{theorem}{Theorem}[section]

\newtheorem{proposition}{Proposition}[section]
\newtheorem{corollary}{Corollary}[section]
\newtheorem{example}{Example}[section]
\newenvironment{proof}[1][Proof:]{\begin{trivlist}
\item[\hskip \labelsep {\bfseries #1}]}{\end{trivlist}}

\newcommand{\bX}{ \mathbf{X} }
\newcommand{\bx}{ \mathbf{x} }

\newcommand{\bp}{ \mathbf{p} }
\newcommand{\bA}{ \mathbf{A} }
\newcommand{\ba}{ \mathbf{a} }

\newcommand{\bb}{ \mathbf{b} }
\newcommand{\bC}{ \mathbf{C} }
\newcommand{\bc}{ \mathbf{c} }
\newcommand{\bD}{ \mathbf{D} }

\newcommand{\beee}{ \mathbf{e} }

\newcommand{\bff}{\mathbf{f} }

\newcommand{\bz}{ \mathbf{z} }
\newcommand{\bu}{ \mathbf{u} }
\newcommand{\bl}{ \mathbf{l} }

\newcommand{\by}{ \mathbf{y} }

\newcommand{\balpha}{ \boldsymbol{\alpha} }
\newcommand{\bdelta}{ \boldsymbol{\delta} }

\newcommand{\bsigma}{ \boldsymbol{\sigma} }
\newcommand{\bepsilon}{ \boldsymbol{\epsilon} }
\newcommand{\brho}{ \boldsymbol{\rho} }

\newcommand{\bpi}{ \boldsymbol{\pi} }

\newcommand{\bgamma}{ \boldsymbol{\gamma} }

\newcommand{\bzero}{ \mathbf{0} }

\begin{document}

\title{Trade-off preservation in inverse multi-objective convex optimization}
\date{}
\author[1]{\small Timothy C. Y. Chan}
\author[2]{\small Taewoo Lee}

\affil[1]{\footnotesize Mechanical and Industrial Engineering, University of Toronto, ON M5S 3G8, Canada}
\affil[2]{\footnotesize Industrial Engineering, University of Houston, TX 77204, USA}

\maketitle 

\begin{abstract}
We present a new inverse optimization methodology for multi-objective convex optimization that accommodates an input solution that may not be Pareto optimal and determines a weight vector that produces a Pareto optimal solution that approximates the input solution and preserves the decision maker's intention encoded in it. We introduce a notion of trade-off preservation, which we use as a measure of similarity for approximating the input solution, and show its connection with minimizing an optimality gap. Our inverse model maintains the complexity of the traditional inverse convex models. We propose a linear approximation to the model and a successive linear programming algorithm that balance between trade-off preservation and computational efficiency, and show that our model encompasses many of the existing models from the literature.  We demonstrate the proposed method using clinical data from prostate cancer radiation therapy.
\end{abstract}

\section{Introduction}
Given a feasible solution to an (forward) optimization problem, the inverse optimization problem aims to determine parameter values -- typically objective function parameters -- that make the given solution optimal.  Classical inverse approaches leverage duality to derive tractable inverse problems that retain the complexity of the underlying forward problem \citep{Ahuja,Iyengar_con}. However, these inverse models return a trivial solution (e.g., a coefficient vector of all zeros for inverse linear optimization) if the given feasible solution is not a candidate to be optimal for the forward problem.

In general, there is no guarantee that a given solution is exactly optimal for the assumed forward problem.  An implemented solution may have been adjusted after optimization, it may have been derived heuristically and is only approximately optimal, or it may simply be a noisy observation of a pristine solution.  Another possibility is that the assumed forward model is itself a simplification of a complex system for which an observed solution is near optimal. When inverse problems involve such noise and uncertainty around the model and data, it is important to determine an objective function that replicates as closely as possible the intentions of the decision maker who generated the given solution. Recent studies have generalized the classical inverse models to overcome the issue of given solutions not being exactly optimal. For example, \cite{troutt}, \cite{Keshavarz}, \cite{chow}, \cite{chan_giop}, \cite{bertimas_equilibrium}, \cite{Aswani17}, and \cite{Chan2017} developed approximate inverse optimization models that impute model parameters that make the observed solutions minimally suboptimal. In this paper, we bring together the ideas of~\cite{Keshavarz} and~\cite{chan_giop} and develop a new inverse optimization model for multi-objective convex optimization where the given solutions are not Pareto optimal.

In multi-objective optimization, decision making is typically based in the objective space, i.e., the space where the vector of objective values resides. Deciding between different solutions in this space involves examining the trade-off in objective values between different points on the Pareto frontier. With a weighted objective function, as is common in convex multi-objective problems, these points are generated by solving the forward problem with different weight vectors, which explicitly quantify the trade-off in the objectives deemed acceptable by the decision maker. Conversely, without access to the weights, and only observing a solution on the Pareto frontier, it is possible using classical inverse optimization methods to reverse engineer the weight vector that generated the solution and therefore determine the decision maker's intention with respect to trade-offs. However, if the observed solution is not on the Pareto frontier, approximate inverse optimization models like the ones mentioned above can be applied. Given a non-Pareto solution, such a model will return a weight vector that generates a Pareto optimal solution. We propose that an appropriate inverse optimization model should return a weight vector and corresponding Pareto optimal solution that differs from the input (non-Pareto) solution in a way that is consistent with the trade-offs the original decision maker had in mind. Mathematically, this means the Pareto optimal solution should have adjusted objective values that are ``consistent'' across all the objectives -- a concept we refer to as \emph{trade-off preservation}. In this paper, we will provide a formal definition of trade-off preservation, use it as a measure of similarity between different solutions, and develop an inverse model that preserves the trade-off encoded in the given solution.

Our work generalizes the approach of~\cite{chan_giop} by considering convex multi-objective optimization problems. Also, our notion of trade-off preservation is general enough to represent various ways to characterize trade-offs across multiple objectives. For example, our model can be specialized to the duality gap minimization approaches in~\cite{chan_giop} and the single-objective approximate inverse convex model in~\cite{Keshavarz}. We show that existing inverse optimization models designed for single-objective optimization which could potentially be used for multi-objective optimization, e.g., \cite{Keshavarz}, may not take into consideration trade-offs across multiple objectives encoded in the given solution. As in~\cite{Keshavarz} and~\cite{chan_giop}, we assume that a set of objectives is pre-specified.  Our contributions are:
\begin{itemize}
\item[(1)]  We generalize previous inverse optimization approaches and develop a new inverse convex multi-objective optimization model that is itself a convex problem, accommodates any input solution, and determines a nonzero weight vector that preserves the trade-off encoded in the input solution.  We introduce a notion of trade-off preservation that is generally applicable to multi-objective optimization and prove that some special cases of trade-off preservation are equivalent to the concept of duality gap minimization in inverse optimization.
\item[(2)]  We propose an efficient linear approximation of the proposed inverse problem as well as a successive linear programming algorithm that bridges the exact and approximate methods.  Based on the linear approximation, we propose a general inverse convex multi-objective optimization framework that encompasses many of the inverse models from the literature.
\item[(3)]  We demonstrate the application of our inverse optimization model to a clinical treatment planning problem in prostate cancer radiation therapy. Using a clinical dataset, we show that weights that preserve the trade-off encoded by the given objective values produce treatments that maintain the clinical quality of the original treatments across all relevant metrics.  We  show that inverse models that are not trade-off-preserving may lead to treatments that deviate substantially from the original treatments and violate clinical acceptability criteria.
\end{itemize}

\section{Background}
We first define a canonical multi-objective convex optimization problem as the forward problem.  Then, we briefly review the inverse optimization models from~\citet{Iyengar_con},~\citet{Keshavarz}, and~\citet{chan_giop}.

\subsection{Forward optimization problem}
Let $f_k:\mathbb{R}^n\rightarrow\mathbb{R},k=1,\ldots,K$ and $g_l:\mathbb{R}^n\rightarrow\mathbb{R},l=1,\ldots,L$ be convex functions.  Let $\bx\in\mathbb{R}^n, \bA\in\mathbb{R}^{m\times n},$ and $\bb\in\mathbb{R}^m$.  We define the forward optimization problem (FOP) as
\begin{subequations}\label{eq:fop}
\begin{align}
\textrm{FOP}(\balpha):\quad \underset{\bx}{\text{minimize}} & \quad \sum_{k=1}^K\alpha_k f_k(\bx)\\
\text{subject to} & \quad g_l(\bx) \le 0,\quad l=1,\ldots,L,\\
& \quad \bA\bx = \bb,
\end{align}
\end{subequations}
where $\alpha_k$ is the weight for the $k$-th objective function.  Let $\bX$ be the feasible region of~\eqref{eq:fop}. We assume $\balpha\in\mathbb{R}_+^K\backslash\{\bzero\}$, $f_k(\bx) > 0, k=1,\ldots,K$ for $\bx\in\bX$,  and $\bA$ has full rank. We also assume that Slater's condition holds \citep{boyd_convex}.  We define $\Omega(\balpha)$ to be the set of optimal solutions to FOP($\balpha$) and $\Omega := \bigcup_{\balpha\in\mathbb{R}_+^K\backslash\{\bzero\}}\Omega(\balpha)$.  A solution $\bx\in\bX$ is (weakly) Pareto optimal
if there is no other $\by\in\bX$ such that $f_k(\by)<f_k(\bx),$ for all $k=1,\ldots,K$.  It is known that for a convex multi-objective optimization problem, the set $\Omega$ consists of all Pareto optimal solutions~\citep{moobook3}.  For any $S \subseteq \bX$, we write $\bff(S) = \{(f_1(\bx), \ldots, f_K(\bx)) \, | \, \bx \in S\}$.  We denote $\bff(\bX)$ as the feasible region in the objective space and the set $\bff(\Omega)$ as the Pareto set.

\subsection{Inverse conic optimization by~\citet{Iyengar_con}}\label{sec:Iyengar_con}
We begin by illustrating the approach of \citet{Iyengar_con} using our forward problem~\eqref{eq:fop}. Given $K$ prespecified objectives and a solution $\hat\bx\in\bX$, assumed to be a regular point (\cite{nlpbook}; pp.204--207), a weight vector that makes $\hat\bx$ optimal can be found by solving the following problem:
\begin{subequations}\label{eq:iop_iyengar}
\begin{align}
\underset{\balpha,\bsigma,\bpi}{\text{minimize}} & \quad 0\\
\text{subject to} & \quad \sum_{k=1}^K\alpha_k \nabla_{\!\!\bx} f_k(\hat\bx) + \sum_{l=1}^L\sigma_l \nabla_{\!\!\bx} g_l(\hat\bx) - \bA'\bpi = \bzero,\\
&\quad \sigma_l g_l(\hat\bx) =0,\quad l=1,\ldots,L,\\
&\quad \balpha\ge \bzero,\;\bsigma \ge \bzero.
\end{align}
\end{subequations}
Constraints in problem~\eqref{eq:iop_iyengar} correspond to the KKT conditions for the forward problem~\eqref{eq:fop} with Lagrange multipliers $\bsigma$ and $\bpi$.  If $\balpha^*$ is an optimal solution that arises from solving~\eqref{eq:iop_iyengar}, then $\hat\bx\in\Omega(\balpha^*)$.
\citet{Iyengar_con} used an objective function $||\balpha - \hat\balpha||$ where $\hat\balpha$ is a given weight vector. However, we omit it to allow for an objective of minimizing ``residuals'' to be introduced.  Note that an arbitrary $\hat\bx$ need not be in $\Omega$, in which case formulation~\eqref{eq:iop_iyengar} returns $\balpha^* = \bzero$ as the unique solution (see Example~\ref{ex:iyengar}).

\begin{example}\label{ex:iyengar}
Consider the following bi-objective convex optimization problem:
\begin{subequations}\label{eq:ex_fop}
\begin{align}
\underset{\bx}{\text{minimize}} & \quad \alpha_1 f_1(x_1,x_2) + \alpha_2 f_2(x_1,x_2)\\
\text{subject to} & \quad (x_1-2)^2+(x_2-2)^2 - 1 \le 0,
\end{align}
\end{subequations}
where $f_1(x_1,x_2) = 4 x_1^2 + x_2^2$ and $f_2(x_1,x_2) = x_1^2 + 4 x_2^2$.  Constraints of the corresponding inverse problem given an input solution $\hat\bx$ are:
\begin{subequations}\label{eq:ex_iop}
\begin{align}
&(4\alpha_1 + \alpha_2)\hat{x}_1 + (\hat{x}_1-2)\sigma = 0,\\
&(\alpha_1 + 4\alpha_2)\hat{x}_2 + (\hat{x}_2-2)\sigma = 0,\\
&((\hat{x}_1-2)^2+(\hat{x}_2-2)^2 - 1)\sigma = 0,\\
&\balpha\ge \bzero,\sigma \ge 0.
\end{align}
\end{subequations}
\begin{figure}\centering
\includegraphics[width=73mm]{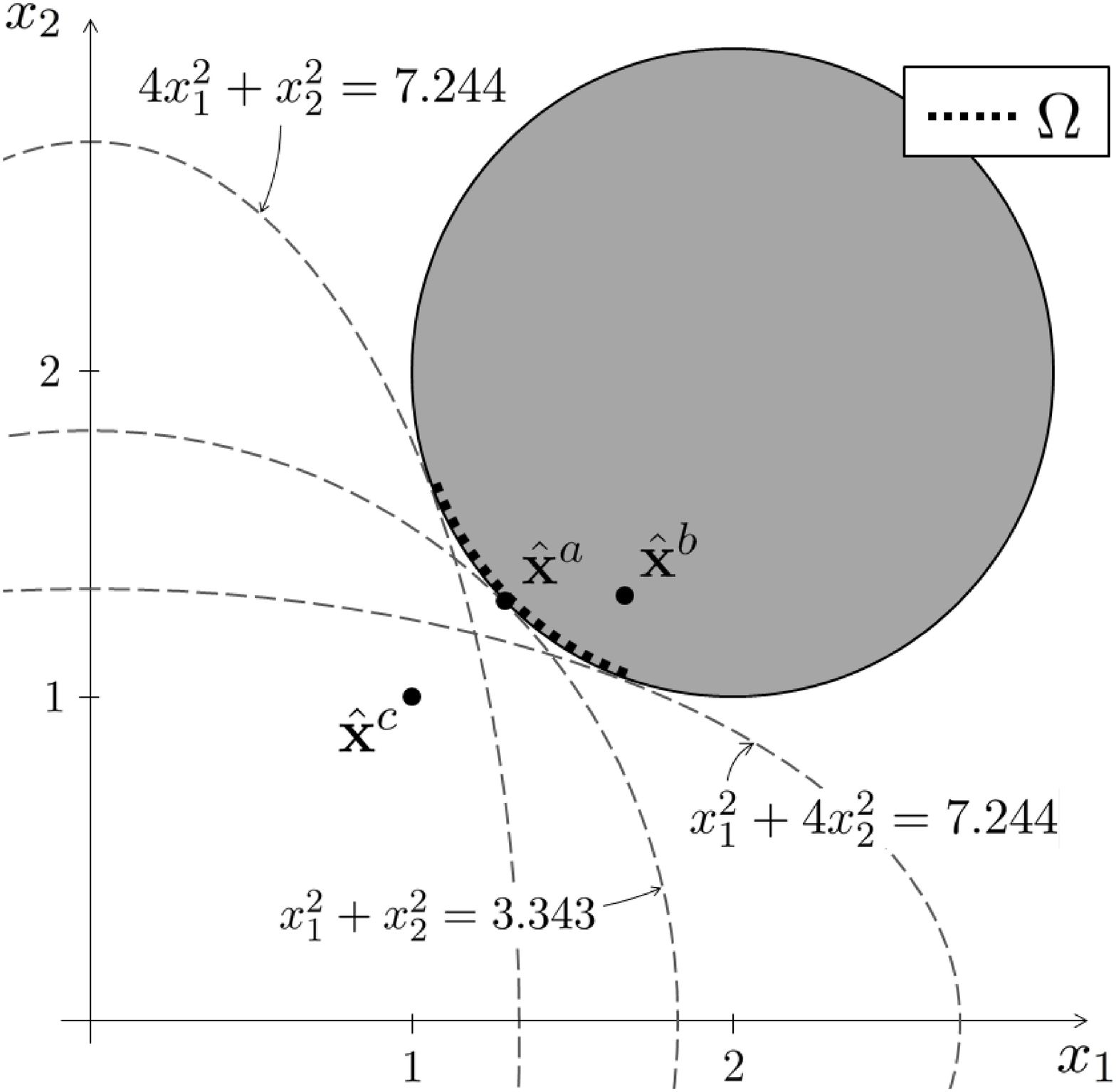}
\caption{Illustration of Example~\ref{ex:iyengar}}\label{fig:example}
\end{figure}
Consider the following three points: $\hat\bx^a = ((4-\sqrt{2})/2,(4-\sqrt{2})/2) \in \Omega$, $\hat\bx^b = (1.7,1.3) \in \bX \setminus \Omega$, and $\hat\bx^c = (1,1) \not\in \bX$ (see Figure~\ref{fig:example}). For $\hat\bx^a$, constraints~\eqref{eq:ex_iop} become:
\begin{subequations}\label{eq:ex_iop1}
\begin{align}
&(4-\sqrt{2})(4\alpha_1 + \alpha_2) - \sqrt{2}\sigma = 0,\\
&(4-\sqrt{2})(\alpha_1 + 4\alpha_2) - \sqrt{2}\sigma = 0,\\
&\balpha\ge \bzero, \sigma \ge 0,
\end{align}
\end{subequations}
which hold when $\alpha_1 = \alpha_2 \ge 0$.   For $\hat\bx^b$, constraints~\eqref{eq:ex_iop} become:
\begin{subequations}\label{eq:ex_iop2}
\begin{align}
& 1.7(4\alpha_1 + \alpha_2) - 0.3\sigma = 0,\\
& 1.3(\alpha_1 + 4\alpha_2) - 0.7\sigma = 0,\\
&\balpha\ge \bzero, \sigma = 0,
\end{align}
\end{subequations}
which are satisfied only when $\alpha_1 = \alpha_2 = 0$.  For $\hat\bx^c$,
constraints~\eqref{eq:ex_iop} become:
\begin{subequations}\label{eq:ex_iop3}
\begin{align}
& (4\alpha_1 + \alpha_2) - \sigma = 0,\\
& (\alpha_1 + 4\alpha_2) - \sigma = 0,\\
&  \balpha\ge \bzero, \sigma = 0,
\end{align}
\end{subequations}
which again are satisfied only when $\alpha_1 = \alpha_2 = 0$.  In all cases, $\balpha = \bzero$ is a feasible inverse solution.  When $\hat\bx\not\in\Omega$ (e.g., $\hat\bx^b$ and $\hat\bx^c$), $\balpha = \bzero$ is the only feasible solution since $\hat\bx$ cannot be optimal for FOP($\balpha$) for any nonzero $\balpha$.
\end{example}

\subsection{Convex objective function imputation by~\citet{Keshavarz}}
\citet{Keshavarz} explicitly considered noise or modeling errors in imputing an objective function from a convex model so that the observed data from an unknown, complex system are as consistent as possible with the proposed model.  Given $K$ prespecified objectives denoted as a vector-valued function $\bff\in\mathbb{R}^{K}$ and a solution $\hat\bx$, they relax the KKT conditions and allow residuals, which are then minimized by solving the following problem:
\begin{subequations}\label{eq:kesh}
\begin{align}
\label{eq:kesh_a}\textrm{KES}(\hat\bx):\quad \underset{\balpha,\bsigma,\bpi,\bdelta,\bgamma,\brho}{\text{minimize}} & \quad \phi(\bdelta,\bgamma,\brho)\\
\label{eq:kesh_b}\text{subject to} & \quad \sum_{k=1}^K\alpha_k \nabla_{\!\!\bx} f_k(\hat\bx) + \sum_{l=1}^L\sigma_l \nabla_{\!\!\bx} g_l(\hat\bx) - \bA'\bpi = \bdelta,\\
\label{eq:kesh_c}& \quad \sigma_l g_l(\hat\bx) = \gamma_l, \quad l=1,\ldots,L,\\
\label{eq:kesh_d}& \quad \pi_j(\ba_j^{\prime}\hat\bx-b_j) = \rho_j, \quad j=1,\ldots,m,\\
\label{eq:kesh_e}&\quad \alpha_1 = 1,\\
\label{eq:kesh_f}&\quad \balpha\ge \bzero,\;\bsigma \ge \bzero,
\end{align}
\end{subequations}
where $\ba_j^{\prime}$ denotes the $j$-th row of $\bA$ and $\phi$ is a function of the residuals such that $\phi(\bdelta,\bgamma,\brho)=0$ if and only if $\bdelta=\bzero$, $\bgamma=\bzero$, and $\brho=\bzero$ (e.g., $\phi(\bdelta,\bgamma,\brho) = ||\bdelta||_2^2+||\bgamma||_2^2 + ||\brho||_2^2$).  Constraint~\eqref{eq:kesh_e} ensures that $\balpha=\bzero$ is not a feasible solution and serves to implicitly normalize the resulting weight vector.  The original version of formulation~\eqref{eq:kesh} in~\citet{Keshavarz} accommodated multiple input data points each with their own residuals, but we simply illustrate their method with a single data point $\hat\bx$.  The extension to multiple input data points is straightforward.  The original model also excluded the $\brho$ residual in~\eqref{eq:kesh_d} as the focus was on $\hat\bx \in \bX$, but we include it here to emphasize the applicability of the model to points $\hat\bx \not\in \bX$.   As noted in their paper, the choice of $\phi$ impacts the solution.  However, how the weights are normalized can also have a large impact, as we demonstrate in the example below and our computational results.
\begin{example}\label{ex:kesh}
We revisit Example~\ref{ex:iyengar} and apply formulation~\eqref{eq:kesh} to $\hat\bx^b$:
\begin{subequations}\label{eq:ex_kesh}
\begin{align}
\underset{\balpha,\sigma,\bdelta,\gamma}{\text{minimize}} & \quad \delta_1^2 + \delta_2^2 + \gamma^2\\
\text{subject to} & \quad 1.7(4\alpha_1 + \alpha_2) - 0.3\sigma = \delta_1,\\
& \quad 1.3(\alpha_1 + 4\alpha_2) - 0.7\sigma = \delta_2,\\
& \quad -0.42\sigma=\gamma,\\
&\quad \alpha_1 = 1,\\
&\quad \balpha\ge \bzero,\;\sigma \ge 0.
\end{align}
\end{subequations}
The optimal weight vector is $(\alpha^*_1, \alpha^*_2) = (1,0)$. The corresponding FOP($\balpha^*$) generates the optimal solution $\bx^*=(1.067,1.641)$.  Comparing $\bff(\hat\bx^b) = (13.250,9.650)$ to $\bff(\bx^*)=(7.244,11.910)$ we see one objective value decreases while the other increases.  Furthermore, if the normalization constraint is changed to $\alpha_2=1$, then the optimal weight vector changes to $\balpha^* = (0,1)$ and the new objective vector $\bff(\bx^*)=(11.910,7.244)$, which is completely opposite to the previous case.
\end{example}

\subsection{Generalized inverse multi-objective linear optimization by~\cite{chan_giop}}
\cite{chan_giop} considered the multi-objective linear optimization problem $\min\{\balpha'\bC\bx\;|\;\bA\bx=\bb,\bx\ge\bzero\}$ as the forward problem, where $\bC$ is a matrix composed of linear objectives $\bc'_k$ (one per row $k$).  In the corresponding inverse problem, the residual to be minimized was the duality gap:
\begin{subequations}\label{eq:chan}
\begin{align}
\quad\underset{\balpha,\bp,\epsilon}{\textrm{minimize}}& \quad \epsilon\\
\quad\textrm{subject to} & \quad \bA'\bp\le\bC'\balpha,\\
& \quad\balpha^\prime\bC\hat\bx=\epsilon\,\bb'\bp,\\
& \quad\|\balpha\|_1=1,\\
& \quad\balpha\ge\bzero,
\end{align}
\end{subequations}
where $\bp$ denotes the dual vector corresponding to the constraint $\bA\bx=\bb$ of the forward program. It was shown that this $\epsilon$-strong duality approach produces a weight vector $\balpha^*$ and a corresponding  forward solution $\bx^*\in\Omega(\balpha^*)$ such that $\bc_k'\bx^* / \bc_k'\hat\bx$ is equal for each objective $k$ for which $\alpha_k>0$. In the next section, we propose a new inverse model that generalizes \cite{chan_giop} to general convex multi-objective optimization problems by explicitly using the notion of trade-off preservation.

\section{Models}
In this section, we formalize the definition of trade-off preservation and develop an inverse convex multi-objective optimization model that determines a weight vector that preserves the trade-off encoded in a given solution. We then propose approximation approaches to speed up computation.  Finally, we draw connections between our inverse model and other models from the literature.

\subsection{Trade-off preservation}
When a given solution is not on the Pareto frontier, we propose that an inverse multi-objective optimization model should find a weight vector that generates a new solution that is Pareto optimal \emph{and} preserves the trade-off encoded in the given solution. With access only to the given solution, we can only infer the trade-off from its objective values. Thus, we characterize how a trade-off is preserved by examining the direction and length of the vector that connects the Pareto optimal solution and the given solution in objective space. 

We first define general trade-off preservation in terms of the component-wise relation between two solutions in objective space.
\begin{definition}[Generalized trade-off preservation]\label{def:PPP}
Given scalars $u_{k_1 k_2}$ for every pair of objectives $k_1$ and $k_2$, a solution $\bx^*$ preserves the trade-off encoded in $\hat\bx$ if the following equalities hold: 
\begin{equation}\label{eq:PPP}
u_{k_1k_2}\Big(f_{k_1}(\bx^*) - f_{k_1}(\hat\bx)\Big) = \Big(f_{k_2}(\bx^*) - f_{k_2}(\hat\bx)\Big), \quad \forall k_1, k_2\in\{1,\ldots,K\}.
\end{equation}
A weight vector $\balpha^*$ preserves the trade-off encoded in $\hat\bx$ if $\bx^* \in \Omega(\balpha^*)$.
\end{definition}
The quantity $u_{k_1k_2}$ is a problem-specific scaling factor that captures the relative difference between objectives $k_1$ and $k_2$. For example, if one unit of measurement for objective 1 is equivalent to 10 units for objective 2, then we set $u_{12}=10$ and $u_{21} = 1/10$. Then, by equating the adjusted amount of perturbation across all the objectives, the perturbed solution $\bx^*$ preserves the initial trade-offs encoded in the given solution $\hat\bx$. Naturally, the scaling factor should have the following properties: 
\begin{enumerate}[(i)]

\item $u_{kk}=1, \quad \forall k \in\{1,\ldots,K\},$

\item $u_{k_1 k_2} u_{k_2 k_1} = 1, \quad \forall k_1, k_2, \in\{1,\ldots,K\}$, and

\item $u_{k_1 k_2} u_{k_2 k_3} = u_{k_1 k_3}, \quad \forall k_1, k_2,  k_3 \in\{1,\ldots,K\}$.

\end{enumerate}
Next, we present two specific cases of trade-off preservation that are intuitive and have a direct connection with the concept of a duality gap. The first case considers two solutions whose objective values are component-wise proportional to be trade-off preserving, which is of practical use when the objectives are measured in different units whose values lie in ranges with substantial variation.  We refer to this type of trade-off preservation as \emph{relative trade-off preservation}.
\begin{definition}[Relative trade-off preservation]\label{def:PPP_rel}
A solution $\bx^*$ preserves the relative trade-off encoded in $\hat\bx$ if the following equalities hold for every pair of objectives $k_1$ and $k_2$:
\begin{equation}\label{eq:PPP_rel}
\frac{f_{k_1}(\bx^*) }{ f_{k_1}(\hat\bx) } = \frac{f_{k_2}(\bx^*)}{f_{k_2}(\hat\bx)}, \quad \forall k_1, k_2\in\{1,\ldots,K\}.
\end{equation}
A weight vector $\balpha^*$ preserves the relative trade-off encoded in $\hat\bx$ if $\bx^* \in \Omega(\balpha^*)$.
\end{definition}

Definition~\ref{def:PPP_rel} says that the objective values of $\bx^*$ are adjusted component-wise by the same relative amount from the objective values of $\hat\bx$.  Geometrically, this definition means that $\bff(\bx^*)$ lies on the line joining $\bff(\hat\bx)$ and the origin. Since our focus is to find a Pareto optimal solution that is trade-off preserving, we are interested in identifying where that line intersects the Pareto set, if at all. This concept is also intimately connected with duality. In particular, in the next subsection we will show that the weight vector $\balpha^*$ that satisfies the relative trade-off preservation leads to the minimum relative duality gap with respect to a given solution $\hat\bx\in\bX$. The relative trade-off preservation is in fact a special case of the general definition of trade-off preservation (i.e., Definition~\ref{def:PPP}) if $u_{k_1 k_2} = f_{k_2}(\hat\bx) / f_{k_1}(\hat\bx), \forall k_1, k_2\in\{1,\ldots,K\}$. Note that these specific scaling factors also satisfy the properties described above.

Similarly, we define an absolute version of trade-off preservation, where the given solution's objective values are perturbed by the same absolute amount. Such an approach may be preferred when the objective values are measured in the same units over the same range.
\begin{definition}[Absolute trade-off preservation]\label{def:PPP_abs}
A solution $\bx^*$ preserves the absolute trade-off encoded in $\hat\bx$ if the following equality holds for every pair of objectives $k_1$ and $k_2$:
\begin{equation}\label{eq:PPP_abs}
f_{k_1}(\bx^*) - f_{k_1}(\hat\bx) = f_{k_2}(\bx^*) - f_{k_2}(\hat\bx), \quad \forall k_1, k_2\in\{1,\ldots,K\}.
\end{equation}
A weight vector $\balpha^*$ preserves the absolute trade-off encoded in $\hat\bx$ if $\bx^* \in \Omega(\balpha^*)$.
\end{definition}
Absolute trade-off preservation is a special case of general trade-off preservation in Definition~\ref{def:PPP} where $u_{k_1k_2} = 1, \forall k_1, k_2\in\{1,\ldots,K\}$. Similar to the relative case, we will show in the next subsection that absolute trade-off preservation has a direct relationship with minimizing the absolute duality gap in an inverse optimization model.

\subsection{Inverse optimization models with trade-off preservation}\label{sec:IOP}
In this subsection, we propose trade-off-preserving inverse multi-objective convex optimization models. We start with the model that takes into account the general definition of trade-off preservation and then present the models for the relative and absolute specializations.

\subsubsection{Inverse optimization model with general trade-off preservation}

We formulate a convex optimization model that determines a weight vector $\balpha^*$ and corresponding optimal solution $\bx^*\in\Omega(\balpha^*)$ that satisfy equation~\eqref{eq:PPP}. To do so, we first define $\mu_k := u_{\tilde{k} k}, k=1,\ldots,K,$ where $\tilde{k} \in \{1,\ldots,K\}$ is an arbitrarily chosen reference objective. Given a set of objectives and an input solution $\hat\bx$, the following formulation, which we call the inverse optimization problem (IOP), provides a necessary and sufficient condition for determining whether there exists a $\bx^* \in \Omega$ that satisfies general trade-off preservation:
\begin{subequations}\label{eq:iop}
\begin{align}
\label{eq:iop1} \text{IOP}(\hat\bx) : \underset{\epsilon,\bx}{\text{minimize}} & \quad \epsilon\\
\label{eq:iop2} \text{subject to} & \quad \mu_k \epsilon\ge f_k(\bx) - f_k(\hat\bx),\quad k = 1,\ldots,K,\\
\label{eq:iop3} & \quad g_l(\bx) \le 0,\quad l=1,\ldots,L,\\
\label{eq:iop4} & \quad \bA\bx = \bb.
\end{align}
\end{subequations}
\begin{theorem}\label{prop:exists_xinOmega}
Let $(\epsilon^*,\bx^*)$ be an optimal solution to problem~\eqref{eq:iop} and $\balpha^*$ be a vector of optimal Lagrange multipliers associated with constraints~\eqref{eq:iop2}. Then,
\begin{itemize}
\item[(a)] There exists a solution in $\Omega$ that satisfies general trade-off preservation for $\hat\bx$ if and only if there exists $(\epsilon^*,\bx^*)$ satisfying the first $K$ constraints with equality.
\item[(b)] $\bx^*\in\Omega(\balpha^*)$.
\end{itemize}
\end{theorem}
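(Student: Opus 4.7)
The plan is to prove part~(a) by showing that the existence of a Pareto optimal trade-off-preserving point corresponds, via the reference objective $\tilde k$, to having all $K$ constraints~\eqref{eq:iop2} tight at optimality, and to prove part~(b) by writing down the KKT conditions of the convex program IOP and matching them to those of FOP$(\balpha^*)$.

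For the forward direction of (a), I will take any trade-off preserving $\bar\bx \in \Omega$ and define $\bar\epsilon := f_{\tilde k}(\bar\bx) - f_{\tilde k}(\hat\bx)$. Applying Definition~\ref{def:PPP} to the pair $(\tilde k, k)$ and using $\mu_k = u_{\tilde k k}$ with $u_{\tilde k \tilde k} = 1$, each of the first $K$ constraints holds with equality at $(\bar\epsilon, \bar\bx)$; feasibility of~\eqref{eq:iop3}--\eqref{eq:iop4} is inherited from $\bar\bx\in\bX$. To upgrade feasibility to optimality, I will suppose for contradiction that some feasible $(\epsilon,\bx)$ has $\epsilon<\bar\epsilon$. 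Positivity of the scaling factors (which follows from composition property (iii) together with their practical interpretation as trade-off ratios) then yields $f_k(\bx) - f_k(\hat\bx) \le \mu_k\epsilon < \mu_k\bar\epsilon = f_k(\bar\bx) - f_k(\hat\bx)$ for every $k$, contradicting the Pareto optimality of $\bar\bx$. For the reverse direction, I will assume an optimal $(\epsilon^*,\bx^*)$ has all first $K$ constraints tight, so $\mu_k\epsilon^* = f_k(\bx^*) - f_k(\hat\bx)$. Properties (ii)--(iii) give $u_{k_1k_2} = \mu_{k_2}/\mu_{k_1}$, whence $u_{k_1k_2}\bigl(f_{k_1}(\bx^*) - f_{k_1}(\hat\bx)\bigr) = (\mu_{k_2}/\mu_{k_1})\mu_{k_1}\epsilon^* = \mu_{k_2}\epsilon^* = f_{k_2}(\bx^*) - f_{k_2}(\hat\bx)$, which is exactly~\eqref{eq:PPP}. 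Combined with part~(b), which gives $\bx^*\in\Omega(\balpha^*)\subseteq\Omega$, the existence claim follows.

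For part~(b), I will first verify Slater's condition for IOP: Slater for FOP furnishes some $\bar\bx$ with $g_l(\bar\bx)<0$, and taking $\epsilon$ large enough makes $(\epsilon,\bar\bx)$ strictly feasible in IOP, so that KKT is necessary at $(\epsilon^*,\bx^*)$. Stationarity in $\epsilon$ gives $\sum_k \alpha^*_k\mu_k = 1$, which (by positivity of the $\mu_k$) in particular forces $\balpha^*\neq\bzero$. Stationarity in $\bx$ yields $\sum_k \alpha^*_k \nabla_{\!\!\bx} f_k(\bx^*) + \sum_l \sigma^*_l \nabla_{\!\!\bx} g_l(\bx^*) + \bA'\bpi^* = \bzero$, which, together with $\sigma^*_l g_l(\bx^*) = 0$, $\bA\bx^* = \bb$, $g_l(\bx^*)\le 0$, and $\bsigma^*\ge\bzero$, are exactly the KKT conditions for the convex program FOP$(\balpha^*)$ at $\bx^*$. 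Convexity of FOP then upgrades these stationarity and complementarity conditions to global optimality, giving $\bx^*\in\Omega(\balpha^*)$.

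I expect the main technical obstacle to be part~(b): one must verify Slater's condition for the augmented IOP (so that KKT is applicable), argue $\balpha^*\ne\bzero$ from the $\epsilon$-stationarity equation (which relies on the positivity of the $\mu_k$), and reconcile sign conventions so that the nonnegative IOP multiplier attached to $\mu_k\epsilon \ge f_k(\bx)-f_k(\hat\bx)$ plays the role of the FOP weight $\alpha^*_k$ rather than its negative. A secondary subtlety is the logical interplay between (a) and (b): the reverse direction of (a) uses (b) to place $\bx^*$ in $\Omega$, so (b) should be proved first (or independently of (a)) to keep the argument clean.
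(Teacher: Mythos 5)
Your proposal is correct, and for part (b) and the ``all constraints tight $\Rightarrow$ trade-off preservation'' direction of (a) it follows essentially the same route as the paper: write the KKT system of IOP, observe that the $\epsilon$-stationarity equation $\sum_k \mu_k\alpha_k^*=1$ rules out $\balpha^*=\bzero$, and recognize the $\bx$-stationarity and complementarity conditions as the KKT conditions of FOP$(\balpha^*)$, which by convexity certify $\bx^*\in\Omega(\balpha^*)$. Where you genuinely diverge is the other direction of (a): the paper takes a trade-off-preserving $\bx^*\in\Omega$, extracts FOP multipliers, and then verifies (after rescaling $\balpha^*$) that the full IOP KKT system is satisfied, invoking sufficiency of KKT for the convex IOP to conclude optimality. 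You instead argue optimality of $(\bar\epsilon,\bar\bx)$ directly: any feasible $(\epsilon,\bx)$ with $\epsilon<\bar\epsilon$ would give $f_k(\bx)<f_k(\bar\bx)$ for every $k$, contradicting the (weak) Pareto optimality of $\bar\bx$. This is more elementary --- it needs no constraint qualification or multiplier bookkeeping for that direction --- at the price of leaning explicitly on $\mu_k>0$. Be aware that positivity of the $\mu_k$ does not actually follow from properties (i)--(iii) alone (e.g.\ $u_{12}=u_{21}=-1$ satisfies all three); it is an implicit standing assumption, one the paper also relies on (its rescaling step to achieve $\sum_k\mu_k\alpha_k^*=1$ with $\balpha^*\ge\bzero$ needs $\sum_k\mu_k\alpha_k^*>0$), so this is a shared, minor gap rather than a flaw specific to your argument. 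Your explicit verification of Slater's condition for IOP, which the paper omits, is a welcome extra step since the necessity of the KKT conditions in part (b) depends on it.
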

\begin{proof} (a) ($\Leftarrow$) Let $(\epsilon^*,\bx^*)$ be an optimal solution to problem~\eqref{eq:iop} with the first $K$ constraints being tight. Then
\begin{subequations}
\begin{align}
&\quad \epsilon^* = \frac{(f_k(\bx^*) - f_k(\hat\bx))}{\mu_{k}},\quad \forall k = 1,\ldots,K\\
\Rightarrow& \quad \frac{f_{k_1}(\bx^*) - f_{k_1}(\hat\bx)}{u_{ \tilde{k} k_1}} = \frac{f_{k_2}(\bx^*) - f_{k_2}(\hat\bx)}{u_{ \tilde{k} k_2}},\quad \forall k_1,k_2 = 1,\ldots,K\\
\Rightarrow& \quad \frac{u_{\tilde{k} k_2 }}{u_{\tilde{k} k_1 }}\Big(f_{k_1}(\bx^*) - f_{k_1}(\hat\bx)\Big) = \Big(f_{k_2}(\bx^*) - f_{k_2}(\hat\bx)\Big),\quad \forall k_1,k_2 = 1,\ldots,K\\
\Rightarrow& \quad u_{k_1 \tilde{k} } u_{ \tilde{k} k_2} \Big(f_{k_1}(\bx^*) - f_{k_1}(\hat\bx)\Big) = \Big(f_{k_2}(\bx^*) - f_{k_2}(\hat\bx)\Big),\quad \forall k_1,k_2 = 1,\ldots,K\\
\Rightarrow& \quad u_{k_1 k_2}\Big(f_{k_1}(\bx^*) - f_{k_1}(\hat\bx)\Big) = \Big(f_{k_2}(\bx^*) - f_{k_2}(\hat\bx)\Big),\quad \forall k_1,k_2 = 1,\ldots,K,
\end{align}
\end{subequations}
which implies that $\bx^*$ satisfies trade-off preservation. All that remains is to prove $\bx^* \in \Omega$. Constraints~\eqref{eq:iop2} and~\eqref{eq:iop3} imply that $\bx^*\in\bX$. Let $\balpha \ge \bzero$, $\bsigma\ge\bzero$, and $\bpi$ be the Lagrange multipliers associated with the first, second, and third sets of constraints of problem~\eqref{eq:iop}, respectively. Consider the Lagrangian associated with problem~\eqref{eq:iop}:
\begin{equation}\label{eq:iop_largange}
L(\balpha,\bsigma,\bpi,\epsilon,\bx) = \epsilon + \sum_{k=1}^K\alpha_k (f_k(\bx)- f_k(\hat\bx) - \mu_k\epsilon) + \sum_{l=1}^L\sigma_l g_l(\bx) + \bpi'(\bb-\bA\bx).
\end{equation}
A solution ($\epsilon^*,\bx^*$) is optimal for problem~\eqref{eq:iop} if and only if there exists $(\balpha,\bsigma,\bpi)\in\mathbb{R}_+^K \times \mathbb{R}_+^L \times \mathbb{R}^m$ that satisfies the following system of equations:
\begin{subequations}\label{eq:iop_KKT}
\begin{align}
\label{eq:iop_KKT1} & 1 - \sum_{k=1}^K \mu_k \alpha_k  = 0, \quad (\nabla_{\!\!\epsilon} L(\balpha,\bsigma,\bpi,\epsilon^*,\bx^*) = 0)\\
\label{eq:iop_KKT2}&\sum_{k=1}^K\alpha_k \nabla_{\!\!\bx} f_k(\bx^*) + \sum_{l=1}^L\sigma_l \nabla_{\!\!\bx} g_l(\bx^*) - \bA'\bpi = \bzero, \quad (\nabla_{\!\!\bx} L(\balpha,\bsigma,\bpi,\epsilon^*,\bx^*) = 0)\\
\label{eq:iop_KKT3} &\alpha_k (f_k(\bx^*)- f_k(\hat\bx) - \mu_k\epsilon^*) = 0, \quad k=1,\ldots,K,\\
\label{eq:iop_KKT4} &\sigma_l g_l(\bx^*) = 0, \quad l=1,\ldots,L.
\end{align}
\end{subequations}
Equations~\eqref{eq:iop_KKT2} and~\eqref{eq:iop_KKT4} form the KKT conditions for the original forward optimization problem~\eqref{eq:fop}.  From \eqref{eq:iop_KKT1}, $\balpha$ is a nonzero vector.   Thus, if a solution $\bx^*$ is optimal for problem~\eqref{eq:iop}, $\bx^*\in\Omega$.

($\Rightarrow$)
Let $\bx^*\in\Omega$ satisfy the definition of general trade-off preservation, i.e., for any two objectives $k_1, k_2 \in \{1,\ldots,K\}$, $u_{k_1 k_2}\Big(f_{k_1}(\bx^*) - f_{k_1}(\hat\bx)\Big) = \Big(f_{k_2}(\bx^*) - f_{k_2}(\hat\bx)\Big)$. The KKT conditions for the forward problem imply that there exists $(\balpha^*, \bsigma^*, \bpi^*)$ that satisfies \eqref{eq:iop_KKT2} and  \eqref{eq:iop_KKT4} with at least one $k$ for which $\alpha_k^*>0$. What remains is to show that \eqref{eq:iop_KKT1} and  \eqref{eq:iop_KKT3}  are also satisfied.  Since $u_{k_1 k_2}\Big(f_{k_1}(\bx^*) - f_{k_1}(\hat\bx)\Big) = \Big(f_{k_2}(\bx^*) - f_{k_2}(\hat\bx)\Big), k_1,k_2 = 1,\ldots,K,$ the following is true for some objective $\tilde k$:
\begin{subequations}
\begin{align}
& \quad u_{k_1 \tilde{k} } u_{ \tilde{k} k_2} \Big(f_{k_1}(\bx^*) - f_{k_1}(\hat\bx)\Big) = \Big(f_{k_2}(\bx^*) - f_{k_2}(\hat\bx)\Big),\quad k_1,k_2 = 1,\ldots,K\\
\Rightarrow& \quad \frac{f_{k_1}(\bx^*) - f_{k_1}(\hat\bx)}{u_{ \tilde{k} k_1}} = \frac{f_{k_2}(\bx^*) - f_{k_2}(\hat\bx)}{u_{ \tilde{k} k_2}},\quad k_1,k_2 = 1,\ldots,K\\
\Rightarrow&\quad \exists\epsilon^* \text{ s.t. } \epsilon^* = \frac{(f_k(\bx^*) - f_k(\hat\bx))}{u_{ \tilde{k} k}},\quad k = 1,\ldots,K\\
\Rightarrow&\quad \mu_k \epsilon^* = f_k(\bx) - f_k(\hat\bx),\quad \text{where $\mu_k =u_{\tilde{k} k}$}, k = 1,\ldots,K,
\end{align}
\end{subequations}
which implies that \eqref{eq:iop_KKT3} is satisfied. Finally, equation~\eqref{eq:iop_KKT1} can be satisfied through a re-scaling of $\balpha^*$, which is possible since $\balpha^*$ is not identically zero.  Thus, $(\epsilon^*,\bx^*)$ is optimal for~\eqref{eq:iop}.

(b) Since $\bx^*$  and $\balpha^*$ satisfy \eqref{eq:iop_KKT2} and \eqref{eq:iop_KKT4}, which are the KKT conditions for the forward problem~\eqref{eq:fop}, $\bx^* \in \Omega(\balpha^*)$. $\square$
\end{proof}

Theorem~\ref{prop:exists_xinOmega} suggests that solving IOP$(\hat\bx)$ can simultaneously identify, if they exist, a trade-off preserving solution $\bx^* \in \Omega$ and a corresponding optimal weight vector $\balpha^*$. Theorem~\ref{prop:exists_xinOmega} also implies that if there does not exist an optimal solution to problem~\eqref{eq:iop} that satisfies the first $K$ constraints with equality, then there is no trade-off preserving solution in $\Omega$.

Note that the choice of the reference objective $\tilde{k}$ in the definition of $\mu_k$ simply scales the optimal objective value for problem~\eqref{eq:iop}, without changing the feasible region of~\eqref{eq:iop}. Because the functions $f_k(\cdot)$ and $g_l(\cdot)$ are convex functions, formulation~\eqref{eq:iop} is a convex optimization problem.

\subsubsection{Inverse optimization model with relative trade-off preservation}

The following inverse model determines a weight vector $\balpha^*$ and $\bx^* \in \text{FOP}(\balpha^*)$ that satisfy relative trade-off preservation given a solution $\hat\bx$:
\begin{subequations}\label{eq:iop_rel}
\begin{align}
\label{eq:iop_rel1} \text{IOP}_r(\hat\bx) : \quad \underset{\epsilon,\bx}{\text{minimize}} & \quad \epsilon\\
\label{eq:iop_rel2} \text{subject to} & \quad \epsilon f_k(\hat\bx)\ge f_k(\bx),\quad k = 1,\ldots,K,\\
\label{eq:iop_rel3} & \quad g_l(\bx) \le 0,\quad l=1,\ldots,L,\\
\label{eq:iop_rel4} & \quad \bA\bx = \bb.
\end{align}
\end{subequations}

Theorem~\ref{prop:exists_xinOmega} holds for problem~\eqref{eq:iop_rel} because \eqref{eq:iop_rel} is a special case of the general inverse model~\eqref{eq:iop} where $\mu_k = u_{\tilde{k} k} = f_{ k}(\hat\bx) / f_{\tilde k}(\hat\bx), \forall k \in\{1,\ldots,K\}$. The derivation is straightforward and omitted. As mentioned previously, the definition of relative trade-off preservation provides a simple and intuitive connection with minimizing relative duality  gap.
\begin{proposition}\label{prop:minrelgap}
Given $\hat\bx\in\bX$, let $(\epsilon^*,\bx^*)$ be an optimal solution to $\text{\emph{IOP}}_r(\hat\bx)$. Then $1/\epsilon^*$ is the minimum relative duality gap with respect to $\hat\bx$. 
\end{proposition}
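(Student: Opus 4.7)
I interpret the relative duality gap in the sense used by \cite{chan_giop}: for each admissible weight vector $\balpha\in\mathbb{R}_+^K\setminus\{\bzero\}$, define
\[
G(\balpha):=\frac{\sum_{k=1}^K\alpha_k f_k(\hat\bx)}{\min_{\bx\in\bX}\sum_{k=1}^K\alpha_k f_k(\bx)},
\]
the ratio of the primal value at $\hat\bx$ to the optimal value of $\textrm{FOP}(\balpha)$. Because $\hat\bx\in\bX$, we have $G(\balpha)\ge 1$, and the minimum relative duality gap with respect to $\hat\bx$ is $\min_{\balpha}G(\balpha)$. The proposition then reduces to the identity $\min_{\balpha}G(\balpha)=1/\epsilon^*$, which I would prove in two directions: (a) exhibit a weight vector that attains the value $1/\epsilon^*$, and (b) prove $G(\balpha)\ge 1/\epsilon^*$ for every admissible $\balpha$.

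\textbf{Attainment.} I would work with the Lagrangian of $\text{IOP}_r(\hat\bx)$ directly. Stationarity in $\epsilon$ (the analogue of~\eqref{eq:iop_KKT1}) forces $\sum_k\alpha_k^* f_k(\hat\bx)=1$, so any optimal Lagrange multiplier vector $\balpha^*$ associated with constraints~\eqref{eq:iop_rel2} is necessarily nonzero. Theorem~\ref{prop:exists_xinOmega}(b), applied via the specialization $\mu_k=f_k(\hat\bx)/f_{\tilde k}(\hat\bx)$, yields $\bx^*\in\Omega(\balpha^*)$. Complementary slackness on~\eqref{eq:iop_rel2} reads $\alpha_k^*(f_k(\bx^*)-\epsilon^* f_k(\hat\bx))=0$, so $f_k(\bx^*)=\epsilon^* f_k(\hat\bx)$ for every $k$ with $\alpha_k^*>0$; inactive indices contribute zero on both sides when I multiply by $\alpha_k^*$ and sum, giving $\sum_k\alpha_k^* f_k(\bx^*)=\epsilon^*\sum_k\alpha_k^* f_k(\hat\bx)$. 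Since $\bx^*\in\Omega(\balpha^*)$ is an optimizer of $\textrm{FOP}(\balpha^*)$, the denominator of $G(\balpha^*)$ equals $\sum_k\alpha_k^* f_k(\bx^*)$, so $G(\balpha^*)=1/\epsilon^*$.

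\textbf{Lower bound and main obstacle.} For the lower bound, fix any $\balpha\in\mathbb{R}_+^K\setminus\{\bzero\}$ and any $\bar\bx\in\Omega(\balpha)$. Since $\bx^*\in\bX$ is feasible for $\textrm{FOP}(\balpha)$, optimality of $\bar\bx$ yields $\sum_k\alpha_k f_k(\bar\bx)\le\sum_k\alpha_k f_k(\bx^*)$. Feasibility of $(\epsilon^*,\bx^*)$ in $\text{IOP}_r$ gives $f_k(\bx^*)\le\epsilon^* f_k(\hat\bx)$ for every $k$, and with $\alpha_k\ge 0$ and $f_k(\hat\bx)>0$ this yields $\sum_k\alpha_k f_k(\bx^*)\le\epsilon^*\sum_k\alpha_k f_k(\hat\bx)$. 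Chaining the two inequalities gives $G(\balpha)\ge 1/\epsilon^*$, which together with the previous paragraph completes the proof. The argument is essentially a chain of elementary inequalities; the only delicate point is the direction, since $\bar\bx$ is a \emph{better} minimizer than $\bx^*$ and so the denominator of $G(\balpha)$ is no larger than $\sum_k\alpha_k f_k(\bx^*)$ — which is precisely what converts the per-objective bound $f_k(\bx^*)\le\epsilon^* f_k(\hat\bx)$ into a lower bound on $G(\balpha)$ rather than an upper bound.
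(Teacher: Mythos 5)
Your proof is correct, and it establishes exactly the quantity the paper calls the minimum relative duality gap, namely the optimal value of problem~\eqref{eq:eps_mingap3}, which is your $G(\balpha)$ minimized over $\balpha\ge\bzero$. The core ingredients coincide with the paper's: both proofs take the optimal Lagrange multipliers $\balpha^*$ of $\text{IOP}_r(\hat\bx)$, use the stationarity condition $\sum_k\alpha_k^*f_k(\hat\bx)=1$ to rule out $\balpha^*=\bzero$, use $\bx^*\in\Omega(\balpha^*)$, and use complementary slackness on~\eqref{eq:iop_rel2} to get $\sum_k\alpha_k^*f_k(\bx^*)=\epsilon^*\sum_k\alpha_k^*f_k(\hat\bx)$. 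Where you diverge is in how optimality of $1/\epsilon^*$ over \emph{all} admissible $\balpha$ is established: the paper asserts in one step that $\epsilon^*$ equals the value of the Lagrangian dual, $\max\{\balpha'\bff(\bx)\,|\,\balpha'\bff(\hat\bx)=1,\balpha\ge\bzero,\bx\in\Omega(\balpha)\}$ (i.e., strong duality for $\text{IOP}_r$), inverts, and then argues separately that the normalization $\balpha'\bff(\hat\bx)=1$ can be dropped by rescaling. Your lower-bound direction replaces this with an elementary two-inequality chain — $\sum_k\alpha_kf_k(\bar\bx)\le\sum_k\alpha_kf_k(\bx^*)\le\epsilon^*\sum_k\alpha_kf_k(\hat\bx)$ — which needs only feasibility of $(\epsilon^*,\bx^*)$ and optimality of $\bar\bx\in\Omega(\balpha)$, makes no appeal to strong duality, and handles the normalization automatically because $G$ is scale-invariant in $\balpha$. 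That buys a proof that is somewhat more self-contained than the paper's terse duality assertion. Both arguments implicitly use that $\epsilon^*>0$ and that all denominators are positive, which follows from the standing assumption $f_k(\bx)>0$ on $\bX$; a sentence to that effect would tidy things up but is not a gap.
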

\begin{proof}
Consider the Lagrangian associated with $\text{IOP}_r(\hat\bx)$ (i.e., \eqref{eq:iop_rel}):
\begin{equation}\label{eq:iop_rel_largangian}
L(\balpha,\bsigma,\bpi,\epsilon,\bx)  =  \epsilon + \sum_{k=1}^K\alpha_k (f_k(\bx)- \epsilon f_k(\hat\bx)) + \sum_{l=1}^L\sigma_l g_l(\bx) + \bpi'(\bb-\bA\bx),
\end{equation}
and the corresponding dual problem: $\underset{\balpha,\bdelta,\bpi}{\max}\; \underset{\epsilon,\bx}{\min}\; L(\balpha,\bsigma,\bpi,\epsilon,\bx)$. Let $(\epsilon^*,\bx^*)$ be an optimal solution to \eqref{eq:iop_rel}. Then we have $\sigma_l g_l(\bx^*) = 0, \forall l = 1,\ldots,L$ and $\bA\bx^* = \bb$, and need $\balpha$ to satisfy $\sum_{k=1}^K\alpha_k  f_k(\hat\bx) = 1$ (i.e., $\nabla_{\epsilon}L = 0$), which cancel out all the terms but $\sum_{k=1}^K\alpha_k f_k(\bx)$ in \eqref{eq:iop_rel_largangian}. Thus, 
\begin{equation}\label{eq:eps_mingap}
\epsilon^* = \max\; \{\balpha' \bff(\bx)\,|\, \balpha' \bff(\hat\bx) = 1, \balpha \ge \bzero, \bx \in \Omega(\balpha)\},
\end{equation}
and therefore 
\begin{subequations}\label{eq:eps_mingap2}
\begin{align}
\label{eq:eps_mingap2_1} 1/\epsilon^* & = \min\; \{1/\balpha' \bff(\bx) \,|\, \balpha' \bff(\hat\bx) = 1, \balpha \ge \bzero, \bx \in \Omega(\balpha) \}\\
\label{eq:eps_mingap2_2} & = \min\; \{\balpha' \bff(\hat\bx)/\balpha' \bff(\bx) \,|\, \balpha' \bff(\hat\bx) = 1, \balpha \ge \bzero, \bx \in \Omega(\balpha) \}.
\end{align}
\end{subequations}
Next, consider a problem that determines the minimum relative duality gap with respect to $\hat\bx$:
\begin{subequations}\label{eq:eps_mingap3}
\begin{align}
\underset{\balpha,\bx}{\text{minimize}} & \quad \balpha' \bff(\hat\bx)/\balpha' \bff(\bx) \\
\text{subject to} & \quad \bx \in \Omega(\balpha), \\
& \quad \balpha \ge \bzero.
\end{align}
\end{subequations}
For any feasible solution $(\tilde\balpha,\tilde\bx)$ for~\eqref{eq:eps_mingap3}, we can find a scaled vector $\balpha^* = \frac{1}{\tilde\balpha'\bff(\hat\bx)}\tilde\balpha$ such that ${\balpha^*}'\bff(\hat\bx) = 1$, $\tilde\bx \in \Omega(\balpha^*)$, and thus $(\balpha^*, \tilde\bx)$ is feasible for \eqref{eq:eps_mingap2_2} with the same objective value. Therefore, the optimal objective value for \eqref{eq:eps_mingap3} coincides with $1/\bepsilon^*$, as desired. $\square$
\end{proof}

Note that the complementary slackness conditions associated with problem~\eqref{eq:iop_rel} provide a sufficient condition for when the relative trade-off preservation is satisfied, which is given in the next result. The proof is straightforward and omitted.
\begin{corollary}\label{prop:relationship}
Let $\bx^*$ be an optimal solution to \emph{IOP}$_r(\hat\bx)$ (i.e., problem~\eqref{eq:iop_rel}) and $\balpha^*$ be a vector of optimal Lagrange multipliers associated with constraints \eqref{eq:iop_rel2}.  If $\alpha_k^* > 0$ for all $k=1,\ldots,K$, then $\bx^*$ satisfies relative trade-off preservation for $\hat\bx$.
\end{corollary}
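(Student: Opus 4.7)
The plan is to invoke the KKT/complementary slackness conditions for IOP$_r(\hat\bx)$ directly, analogous to how they were used for the general inverse model in Theorem~\ref{prop:exists_xinOmega}. Because IOP$_r$ is the specialization of IOP with $\mu_k = f_k(\hat\bx)/f_{\tilde k}(\hat\bx)$, the Lagrangian~\eqref{eq:iop_rel_largangian} already written down in the proof of Proposition~\ref{prop:minrelgap} provides all the machinery I need. The complementary slackness condition for constraint~\eqref{eq:iop_rel2} reads
\begin{equation*}
\alpha_k^*\bigl(\epsilon^* f_k(\hat\bx) - f_k(\bx^*)\bigr) = 0, \quad k=1,\ldots,K.
\end{equation*}

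First I would record this complementary slackness relation as the only ingredient needed. Next, under the hypothesis $\alpha_k^* > 0$ for all $k$, each factor $\alpha_k^*$ can be divided out, forcing $f_k(\bx^*) = \epsilon^* f_k(\hat\bx)$ for every $k$. Dividing by $f_k(\hat\bx) > 0$ (which is positive by the standing assumption $f_k(\bx) > 0$ on $\bX$) yields $f_k(\bx^*)/f_k(\hat\bx) = \epsilon^*$, a single constant independent of $k$. Equating this common ratio across any pair $k_1, k_2$ gives exactly~\eqref{eq:PPP_rel}, so $\bx^*$ satisfies relative trade-off preservation for $\hat\bx$.

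There is essentially no obstacle here: the hard work has already been done in setting up the Lagrangian in the proof of Proposition~\ref{prop:minrelgap} and in establishing in Theorem~\ref{prop:exists_xinOmega} that $\bx^* \in \Omega(\balpha^*)$, which justifies interpreting the $\alpha_k^*$ as the forward-problem weights. The only small point to be careful about is noting that $f_k(\hat\bx) > 0$ so that the ratios in Definition~\ref{def:PPP_rel} are well-defined, but this is guaranteed by the assumption on $\bff$ stated just after~\eqref{eq:fop}. Hence a brief three-line proof citing complementary slackness, the positivity of $\balpha^*$, and Definition~\ref{def:PPP_rel} suffices, which is why the authors rightly describe it as straightforward and omit it.
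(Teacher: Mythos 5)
Your proof is correct and matches the paper's intended argument: the paper omits the proof precisely because, as it notes just before the corollary, it follows directly from the complementary slackness conditions of problem~\eqref{eq:iop_rel}, which is exactly the route you take ($\alpha_k^*>0$ forces $f_k(\bx^*)=\epsilon^* f_k(\hat\bx)$ for every $k$, and positivity of $f_k(\hat\bx)$ makes the common ratio well-defined). Nothing further is needed.
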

The following example illustrates that if there is at least one $k$ such that $\alpha_k^*=0$, then there may or may not be a solution in $\Omega$ that preserves the relative trade-off.
\begin{example}\label{ex:iop}\normalfont
Consider Example~\ref{ex:iyengar} again and two new solutions $\hat\bx^d = (1.725,1.121)$ and $\hat\bx^e = (1.789,1.096)$.  Figure~\ref{fig:example3_1} shows the feasible region and the Pareto set in objective space (i.e., $\bff(\bX)$ and $\bff(\Omega)$, respectively), along with the two points $\bff(\hat\bx^d)=(13.160,8.004)$ and $\bff(\hat\bx^e)=(14.000,8.004)$.  Solving IOP$_r$($\hat\bx^d$) and IOP$_r$($\hat\bx^e$) both return the same optimal weight vector $\balpha^* = (0,1)$ and the same forward optimal solution $\bx^* = \bar\bx^d \in \Omega(\balpha^*)$.  It is clear from Figure~\ref{fig:example3_1} that $\bar\bx^d$ preserves the trade-off preference of $\hat\bx^d$ but not $\hat\bx^e$.  Because $\bff(\bar\bx^e) \not\in \bff(\Omega)$, there is no non-zero weight vector that makes $\bar\bx^e$ optimal to the forward problem.  The case of $\hat\bx^d$ also illustrates the possibility of degeneracy, i.e., $\epsilon^* = f_1(\bx^*)/f_1(\hat\bx^d)$ while $\alpha_1=0$.
\begin{figure}\centering
\includegraphics[width=80mm]{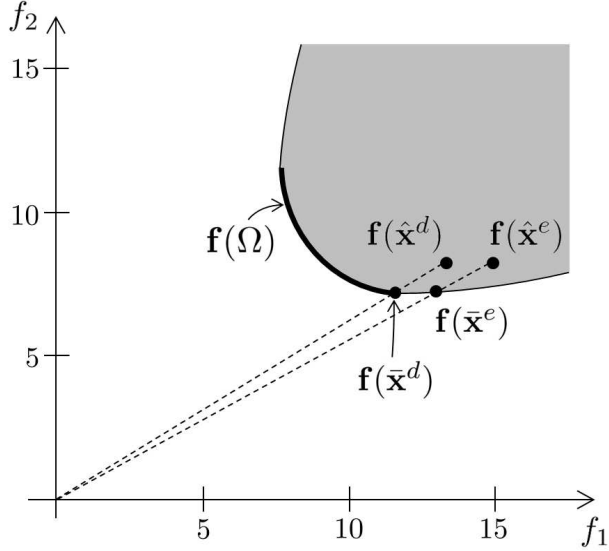}
\caption{Illustration of Example~\ref{ex:iop}}\label{fig:example3_1}
\end{figure}
\end{example}

Example~\ref{ex:iop} illustrates the intuitive fact that when a given $\hat\bx$ is sufficiently ``inferior'' with respect to a particular objective $k$, the optimal weight vector determined from inverse optimization will return $\alpha^*_k = 0$.  In this case the relative trade-off is not perfectly preserved.  However, the objectives with non-zero weights will still satisfy relative trade-off preservation.

\subsubsection{Inverse optimization model with absolute trade-off preservation}
Similar to problem~\eqref{eq:iop_rel} for relative trade-off preservation, a necessary and sufficient condition for determining whether there exists a solution $\bx^* \in \Omega$ that satisfies absolute trade-off preservation for $\hat\bx$ can be established by the following convex optimization model:
\begin{subequations}\label{eq:iop_abs}
\begin{align}
\label{eq:iop_abs1} \text{IOP}_a(\hat\bx) : \underset{\epsilon,\bx}{\text{minimize}} & \quad \epsilon\\
\label{eq:iop_abs2} \text{subject to} & \quad \epsilon\ge f_k(\bx) - f_k(\hat\bx),\quad k = 1,\ldots,K,\\
\label{eq:iop_abs3} & \quad g_l(\bx) \le 0,\quad l=1,\ldots,L,\\
\label{eq:iop_abs4} & \quad \bA\bx = \bb.
\end{align}
\end{subequations}
Similar to the relative model, Theorem~\ref{prop:exists_xinOmega} holds for~\eqref{eq:iop_abs} because it is a special case of the general model~\eqref{eq:iop} where $u_{k_1 k_2}= 1$ for all $k_1, k_2 \in \{1,\ldots,K\}$. Similar to Proposition~\ref{prop:minrelgap}, Proposition~\ref{prop:minabsgap} suggests that a weight vector that produces a solution $\bx^*$ that satisfies absolute trade-off preservation for $\hat\bx$ minimizes the absolute optimality gap with respect to $\hat\bx\in\bX$. The proof of this result involves minor modifications to the proof of Proposition~\ref{prop:minrelgap} and is omitted.
\begin{proposition}\label{prop:minabsgap}
Given $\hat\bx\in\bX$, let $(\epsilon^*,\bx^*)$ be an optimal solution to $\text{\emph{IOP}}_a(\hat\bx)$. Then $-\epsilon^*$ is the minimum absolute duality gap with respect to $\hat\bx$. That is, $-\epsilon^* = \min \{\balpha'\bff(\hat\bx)-\balpha'\bff(\bx)\,|\,\bx\in\Omega(\balpha), \balpha\ge\bzero\} \ge 0$.
\end{proposition}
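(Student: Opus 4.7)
The plan is to reprise the Lagrangian-duality argument of Proposition~\ref{prop:minrelgap} with the single substantive change that the $\epsilon$-coefficient in the relaxed constraints is now $1$ rather than $f_k(\hat\bx)$. Writing the Lagrangian of $\text{IOP}_a(\hat\bx)$ as
\[ L(\balpha,\bsigma,\bpi,\epsilon,\bx) = \epsilon + \sum_{k=1}^K \alpha_k\bigl(f_k(\bx) - f_k(\hat\bx) - \epsilon\bigr) + \sum_{l=1}^L \sigma_l g_l(\bx) + \bpi'(\bb - \bA\bx), \]
with $\balpha,\bsigma \ge \bzero$ and $\bpi$ free, strong duality for $\text{IOP}_a$ is inherited from Slater's condition on the forward problem. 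The stationarity condition $\nabla_\epsilon L = 0$ now yields the affine normalization $\sum_{k=1}^K \alpha_k = 1$, in contrast to the multiplicative normalization $\balpha'\bff(\hat\bx) = 1$ obtained in the relative case.

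Imposing this condition cancels the standalone $\epsilon$-term and leaves $L$ equal, up to the constant $-\balpha'\bff(\hat\bx)$, to the Lagrangian of $\text{FOP}(\balpha)$ with multipliers $(\bsigma,\bpi)$. Minimizing over $\bx$ and invoking strong duality for $\text{FOP}(\balpha)$ identifies the inner minimum with $\balpha'\bff(\bx)$ attained at some $\bx \in \Omega(\balpha)$, so strong duality for $\text{IOP}_a(\hat\bx)$ yields
\[ \epsilon^* = \max \{ \balpha'\bff(\bx) - \balpha'\bff(\hat\bx) \mid \textstyle\sum_{k} \alpha_k = 1,\ \balpha \ge \bzero,\ \bx \in \Omega(\balpha) \}, \]
and negating produces the claimed identity (modulo the question of the normalization constraint, which I address below). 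Non-negativity is immediate: for any feasible pair, $\bx \in \Omega(\balpha)$ minimizes $\balpha'\bff(\cdot)$ over $\bX$ while $\hat\bx \in \bX$, so $\balpha'\bff(\hat\bx) - \balpha'\bff(\bx) \ge 0$.

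The main obstacle, and the only real departure from the relative case, is removing the normalization at the end. In the proof of Proposition~\ref{prop:minrelgap}, the objective $\balpha'\bff(\hat\bx)/\balpha'\bff(\bx)$ is scale-invariant in $\balpha$, so any $\tilde\balpha$ can be rescaled to satisfy $\tilde\balpha'\bff(\hat\bx) = 1$ without changing the objective value. The absolute gap is \emph{not} scale-invariant, so extra care is needed: the natural reading is to interpret the minimum in the proposition as taken over the simplex $\sum_k \alpha_k = 1,\ \balpha \ge \bzero$ (the normalization implicit in the inverse formulations of \citet{Keshavarz} and in the stationarity condition above), which makes the minimum well-defined and attained. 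Under this reading the identity $-\epsilon^* = \min\{\balpha'\bff(\hat\bx) - \balpha'\bff(\bx) \mid \sum_k \alpha_k = 1,\ \balpha\ge\bzero,\ \bx\in\Omega(\balpha)\}$ follows directly from the strong-duality derivation above. I expect this normalization bookkeeping to be the only step requiring any thought beyond a mechanical transcription of the relative-case proof.
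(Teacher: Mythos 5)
Your proof is correct and is essentially the argument the paper intends: the paper omits the proof of Proposition~\ref{prop:minabsgap} as a ``minor modification'' of Proposition~\ref{prop:minrelgap}, and your transcription (with $\nabla_{\!\epsilon}L=0$ now giving $\sum_{k}\alpha_k=1$ in place of $\balpha'\bff(\hat\bx)=1$, followed by the same strong-duality identification of the inner minimum with $\balpha'\bff(\bx)$ for $\bx\in\Omega(\balpha)$) is exactly that modification. Your normalization remark is also well taken: since the absolute gap is not scale-invariant in $\balpha$, the minimum in the statement must be read with a fixed normalization such as the simplex $\sum_k\alpha_k=1$ (which your stationarity condition supplies), as otherwise scaling $\balpha$ toward zero drives the unnormalized infimum to $0$.
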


Finally, the following result provides a sufficient condition for when absolute trade-off preservation is satisfied. Again, the proof is straightforward and omitted.
\begin{corollary}\label{prop:relationship_abs}
Let $\bx^*$ be an optimal solution to problem~\eqref{eq:iop_abs} and $\balpha^*$ be a vector of optimal Lagrange multipliers associated with constraints~\eqref{eq:iop_abs2}.  If $\alpha_k^* > 0$ for all $k=1,\ldots,K$, then $\bx^*$ satisfies absolute trade-off preservation for $\hat\bx$.
\end{corollary}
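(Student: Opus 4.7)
The plan is to derive the result directly from complementary slackness in the KKT conditions of $\text{IOP}_a(\hat\bx)$, mirroring the argument used in the proof of Theorem~\ref{prop:exists_xinOmega}. Since problem~\eqref{eq:iop_abs} is the special case of problem~\eqref{eq:iop} corresponding to $\mu_k = 1$ for all $k$, the KKT conditions of the general inverse model specialize cleanly, and no new convex-analytic machinery is needed.

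First, I would write the Lagrangian
\begin{equation*}
L(\balpha,\bsigma,\bpi,\epsilon,\bx) = \epsilon + \sum_{k=1}^K \alpha_k \bigl(f_k(\bx) - f_k(\hat\bx) - \epsilon\bigr) + \sum_{l=1}^L \sigma_l g_l(\bx) + \bpi'(\bb - \bA\bx),
\end{equation*}
and then record the KKT conditions associated with~\eqref{eq:iop_abs}: stationarity in $\epsilon$ gives $\sum_{k=1}^K \alpha_k^* = 1$ (so in particular $\balpha^* \ne \bzero$, consistent with Slater's condition guaranteeing strong duality), stationarity in $\bx$ together with $\sigma_l^* g_l(\bx^*) = 0$ recover the KKT conditions of the forward problem $\mathrm{FOP}(\balpha^*)$, and crucially the complementary slackness condition
\begin{equation*}
\alpha_k^* \bigl(f_k(\bx^*) - f_k(\hat\bx) - \epsilon^*\bigr) = 0, \quad k = 1,\ldots,K.
\end{equation*}

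Next, I would invoke the hypothesis $\alpha_k^* > 0$ for every $k$. Complementary slackness then forces each constraint~\eqref{eq:iop_abs2} to hold with equality, yielding $f_k(\bx^*) - f_k(\hat\bx) = \epsilon^*$ for all $k$. Subtracting these equalities pairwise across any two objectives $k_1$ and $k_2$ eliminates the common value $\epsilon^*$ and gives
\begin{equation*}
f_{k_1}(\bx^*) - f_{k_1}(\hat\bx) = f_{k_2}(\bx^*) - f_{k_2}(\hat\bx),\quad \forall k_1, k_2 \in \{1,\ldots,K\},
\end{equation*}
which is exactly the defining identity~\eqref{eq:PPP_abs} of absolute trade-off preservation. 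Combined with $\bx^* \in \Omega(\balpha^*)$ (which follows from the forward-problem KKT conditions just as in part~(b) of Theorem~\ref{prop:exists_xinOmega}), this completes the argument.

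There is no real obstacle here: the only thing to be careful about is ensuring that strong duality justifies using the KKT conditions, which is immediate since the $f_k$ and $g_l$ are convex and Slater's condition for the forward problem transfers to~\eqref{eq:iop_abs} (any strictly feasible $\bx$ together with a sufficiently large $\epsilon$ is strictly feasible for~\eqref{eq:iop_abs}). Everything else is a one-line consequence of complementary slackness, which is why the authors deemed the proof straightforward enough to omit.
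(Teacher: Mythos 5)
Your proof is correct and follows exactly the route the paper intends: the text preceding Corollary~\ref{prop:relationship} explicitly attributes these sufficiency results to the complementary slackness conditions of the inverse problem, which is precisely the mechanism you use (strict positivity of every $\alpha_k^*$ forces each constraint~\eqref{eq:iop_abs2} to be tight at $\epsilon^*$, giving the pairwise equalities of Definition~\ref{def:PPP_abs}). The paper omits the proof as straightforward, and your write-up, including the check that KKT conditions apply, is a faithful filling-in of that omission.
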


\subsection{Linearized inverse optimization models with trade-off preservation}
In this section we formulate a linear approximation to IOP($\hat\bx$) (i.e., formulation~\eqref{eq:iop}). The reasons for presenting the linearization are two-fold. First, the linearized model helps elucidate connections with other inverse models in the literature, as we will discuss in the next section. Second, as we will demonstrate in the computational results, the linear approximation approach is computationally efficient to implement and may generate a solution that is close to the optimal solution from the exact inverse model, either as a stand-alone model or as part of a specialized solution methodology for the exact model.

The linearized inverse optimization problem (LIOP) for the general model~\eqref{eq:iop} can be formulated by linearizing the first two sets of constraints of~\eqref{eq:iop} around some point $\tilde\bx$:
\begin{subequations}\label{eq:liop}
\begin{align}
\label{eq:liop1} \text{LIOP}(\hat\bx,\tilde\bx) : \underset{\epsilon,\bx}{\text{minimize}} & \quad \epsilon\\
\label{eq:liop2} \text{subject to} & \quad \mu_k\epsilon\ge f_k(\tilde\bx) + \nabla_{\!\!\bx}f_k(\tilde\bx)'(\bx-\tilde\bx) - f_k(\hat\bx),\quad k = 1,\ldots,K,\\
\label{eq:liop3} & \quad g_l(\tilde\bx) + \nabla_{\!\!\bx}g_l(\tilde\bx)(\bx-\tilde\bx) \le 0,\quad l=1,\ldots,L,\\
\label{eq:liop4} & \quad \bA\bx = \bb.
\end{align}
\end{subequations}
Since $f_k$ and $g_l$ are convex, it is straightforward to see that formulation~\eqref{eq:liop} forms an outer approximation -- and thus provides a lower bound -- to formulation~\eqref{eq:iop}.  If $\tilde\bx = \bx^*$, where $\bx^*$ is an optimal solution to problem~\eqref{eq:iop}, then an optimal Lagrange multiplier for constraints~\eqref{eq:liop2} is also optimal for~\eqref{eq:iop}.  Unfortunately, $\bx^*$ is not known \emph{a priori}.  One option is simply to solve~\eqref{eq:liop} with $\tilde\bx=\hat\bx$ (i.e., solve LIOP$(\hat\bx,\hat\bx)$, which for brevity we refer to as LIOP$(\hat\bx)$).  Note that linearization may render problem~\eqref{eq:liop} unbounded.  In this case, a trust region $\bx\in[\hat\bx-\kappa\beee,\hat\bx+\kappa\beee]$ for some $\kappa$ can be added, where $\beee$ is the vector of ones \citep{nlpbook}. 

Building on this idea, since~\eqref{eq:liop} is a first-order approximation of~\eqref{eq:iop}, we can employ the well-known successive linear programming (SLP) algorithm~\citep{nonlin_alg2,nlpbook} to solve formulation~\eqref{eq:iop}.  The algorithm repeatedly solves the linear problem~\eqref{eq:liop} using a trust region approach, generating an optimal solution $\bx^*_i$ in iteration $i$ that is used as the input vector $\tilde\bx$ in iteration $i+1$.  In Section~\ref{sec:results}, we implement the SLP algorithm based on~\citet{nonlin_alg2} -- we refer the reader to that paper for details of the algorithm and a proof of convergence.

\subsection{Relationships between inverse optimization models}
Focusing on the general model, if $\hat\bx \in \Omega$, then an optimal $\balpha^*$ derived from one of IOP($\hat\bx$), LIOP($\hat\bx$), or KES($\hat\bx$) is optimal for all of them. If $f_k, \, k = 1, \ldots, K$ and $g_l, \, l = 1, \ldots, L$ are linear, then IOP($\hat\bx$) and LIOP($\hat\bx$) are equivalent; particularly IOP($\hat\bx$) with the relative and absolute trade-off preservation specializes to models of~\cite{chan_giop} .   The linearization described in the previous section provides a bridge between our inverse formulation~\eqref{eq:iop} and the model of~\citet{Keshavarz}.  The next result draws an equivalence between LIOP($\hat\bx$) and KES($\hat\bx$), thus illustrating how the model of~\citet{Keshavarz} can be modified to take into account trade-off preservation.
\begin{proposition}\label{thm:equivalence}
An optimal weight vector solution to \emph{KES}$(\hat\bx)$ is identical to an optimal weight vector solution to the dual of \emph{LIOP}$(\hat\bx)$, if $\bdelta=\bzero, \phi(\bdelta,\bgamma,\brho) = -\bgamma'\beee + \brho'\beee$, and $\balpha$ is normalized by the constraint $\sum_{k=1}^K \mu_k \alpha_k = 1$ instead of $\alpha_1 = 1$.
\end{proposition}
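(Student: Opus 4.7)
The plan is to recognize that the modified \text{KES}$(\hat\bx)$ described in the proposition is nothing but the linear-programming dual of \text{LIOP}$(\hat\bx)$, after eliminating $\bdelta,\bgamma,\brho$ via their defining equations and simplifying the dual objective through the stationarity constraint. So I would first write \text{LIOP}$(\hat\bx)=\text{LIOP}(\hat\bx,\hat\bx)$ explicitly (noting that the $f_k(\tilde\bx)-f_k(\hat\bx)$ term vanishes when $\tilde\bx=\hat\bx$) and observe that it is an LP in $(\epsilon,\bx)$, then derive its dual by standard linear duality.

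Concretely, attach dual multipliers $\alpha_k\ge 0$ to constraints \eqref{eq:liop2}, $\sigma_l\ge 0$ to \eqref{eq:liop3}, and $\bpi$ (free) to \eqref{eq:liop4}. Stationarity of the Lagrangian in $\epsilon$ yields the normalization $\sum_{k=1}^K\mu_k\alpha_k=1$, and stationarity in $\bx$ yields
\begin{equation*}
\sum_{k=1}^K\alpha_k\nabla_{\!\!\bx}f_k(\hat\bx)+\sum_{l=1}^L\sigma_l\nabla_{\!\!\bx}g_l(\hat\bx)-\bA'\bpi=\bzero,
\end{equation*}
which is precisely constraint \eqref{eq:kesh_b} of \text{KES}$(\hat\bx)$ with $\bdelta=\bzero$. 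The remaining dual constraints $\balpha\ge\bzero,\bsigma\ge\bzero$ match \eqref{eq:kesh_f}. So feasibility sets already coincide once the $\alpha_1=1$ normalization is replaced by $\sum_k\mu_k\alpha_k=1$.

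The one computation that takes a moment is matching objective values. Collecting constant terms in the Lagrangian gives the dual objective
\begin{equation*}
-\sum_{k=1}^K\alpha_k\nabla_{\!\!\bx}f_k(\hat\bx)'\hat\bx+\sum_{l=1}^L\sigma_l\bigl(g_l(\hat\bx)-\nabla_{\!\!\bx}g_l(\hat\bx)'\hat\bx\bigr)+\bpi'\bb.
\end{equation*}
Taking the inner product of the stationarity equation with $\hat\bx$ shows that $\sum_k\alpha_k\nabla_{\!\!\bx}f_k(\hat\bx)'\hat\bx+\sum_l\sigma_l\nabla_{\!\!\bx}g_l(\hat\bx)'\hat\bx=\bpi'\bA\hat\bx$, and substituting this back collapses the dual objective to $\sum_{l=1}^L\sigma_l g_l(\hat\bx)-\bpi'(\bA\hat\bx-\bb)$. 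Maximizing this is the same as minimizing its negative, which upon identifying $\gamma_l=\sigma_l g_l(\hat\bx)$ (cf.\ \eqref{eq:kesh_c}) and $\rho_j=\pi_j(\ba_j'\hat\bx-b_j)$ (cf.\ \eqref{eq:kesh_d}) is exactly $\phi(\bzero,\bgamma,\brho)=-\bgamma'\beee+\brho'\beee$.

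The main obstacle is bookkeeping rather than conceptual: one must be careful that the choice of sign convention for the dual multipliers produces the stationarity condition in the same orientation used by \citeauthor{Keshavarz}, and that the substitution via the stationarity equation is legitimate for arbitrary dual-feasible points (it is, because stationarity is an equality constraint of the dual). Once the dual objective is re-expressed using stationarity, the identification with the modified \text{KES}$(\hat\bx)$ is immediate, and strong LP duality guarantees that optimal weight vectors $\balpha^*$ coincide, completing the proof.
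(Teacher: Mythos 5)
Your proposal is correct and follows essentially the same route as the paper: form the LP dual of LIOP$(\hat\bx)$, read off the normalization $\sum_k\mu_k\alpha_k=1$ and the stationarity constraint, then use the inner product of the stationarity equation with $\hat\bx$ to collapse the dual objective to $\sum_l\sigma_l g_l(\hat\bx)-\bpi'(\bA\hat\bx-\bb)$ and identify it with $-\bgamma'\beee+\brho'\beee$ via the residual definitions. The only cosmetic difference is that you derive the dual from the Lagrangian rather than stating it directly, and your closing appeal to strong duality is unnecessary once the two problems are shown to be identical.
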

\begin{proof}
Consider the dual of LIOP$(\hat\bx)$:
\begin{subequations}\label{eq:liop_dual}
\begin{align}
\label{eq:liop_dual_a} \underset{\balpha,\bsigma,\bpi}{\text{maximize}}&\quad
\bb'\bpi - \sum_{k=1}^K \alpha_k {\hat\bx}'\nabla_{\!\!\bx} f_k(\hat\bx) + \sum_{l=1}^L\sigma_l (g_l(\hat\bx)- {\hat\bx}'\nabla_{\!\!\bx} g_l(\hat\bx))\\
\label{eq:liop_dual_b} \text{subject to} & \quad \sum_{k=1}^K \mu_k \alpha_k = 1,\\
\label{eq:liop_dual_c} & \quad \sum_{k=1}^K\alpha_k \nabla_{\!\!\bx} f_k(\hat\bx) + \sum_{l=1}^L\sigma_l \nabla_{\!\!\bx} g_l(\hat\bx) - \bA'\bpi = \bzero,\\
\label{eq:liop_dual_d} & \quad \balpha \ge \bzero,\; \bsigma \ge \bzero.
\end{align}
\end{subequations}
We take the inner product of constraint~\eqref{eq:liop_dual_c} with $\hat\bx$ to get
\begin{equation}\label{eq:nabla_x}
\sum_{k=1}^K\alpha_k {\hat\bx}'\nabla_{\!\!\bx} f_k(\hat\bx) + \sum_{l=1}^L\sigma_l {\hat\bx}'\nabla_{\!\!\bx} g_l(\hat\bx) - \bpi'\bA\hat\bx = 0.
\end{equation}
By adding the left-hand side of equation~\eqref{eq:nabla_x} to the objective function of problem~\eqref{eq:liop_dual}, we obtain the following problem:
\begin{subequations}\label{eq:liop_dual3}
\begin{align}
\label{eq:liop_dual3_1} \underset{\balpha,\bsigma,\bpi}{\text{maximize}}& \quad \sum_{l=1}^L\sigma_l g_l(\hat\bx) - \bpi'(\bA\hat\bx-\bb),\\
\label{eq:liop_dual3_2} \text{subject to} & \quad \sum_{k=1}^K \mu_k \alpha_k = 1,\\
\label{eq:liop_dual3_3} & \quad \sum_{k=1}^K\alpha_k \nabla_{\!\!\bx} f_k(\hat\bx) + \sum_{l=1}^L\sigma_l \nabla_{\!\!\bx} g_l(\hat\bx) - \bA'\bpi = \bzero,\\
\label{eq:liop_dual3_4} & \quad \balpha \ge \bzero,\; \bsigma \ge \bzero.
\end{align}
\end{subequations}
KES($\hat\bx$) with $\bdelta=\bzero, \phi(\bdelta,\bgamma,\brho) = -\bgamma'\beee + \brho'\beee$, and $\balpha$ normalized by the constraint $\sum_{k=1}^K \mu_k \alpha_k = 1$ is
\begin{subequations}\label{eq:kesh_modified}
\begin{align}
\underset{\balpha,\bsigma,\bpi,\bgamma,\brho}{\text{minimize}} & \quad -\bgamma'\beee + \brho'\beee\\
\text{subject to} & \quad \sum_{k=1}^K\alpha_k \nabla_{\!\!\bx} f_k(\hat\bx) + \sum_{l=1}^L\sigma_l \nabla_{\!\!\bx} g_l(\hat\bx) - \bA'\bpi = \bzero,\\
& \quad \sigma_l g_l(\hat\bx) = \gamma_l, \quad l=1,\ldots,L,\\
& \quad \pi_j(\ba_j^{\prime}\hat\bx-b_j) = \rho_j, \quad j=1,\ldots,m,\\
&\quad \sum_{k=1}^K \mu_k \alpha_k = 1,\\
&\quad \balpha\ge \bzero,\;\bsigma \ge \bzero,
\end{align}
\end{subequations}
which is equivalent to problem~\eqref{eq:liop_dual3}. $\square$
\end{proof}

Note that if $\hat\bx\in\bX$, the form of $\phi(\bdelta,\bgamma,\brho)$ in the statement of  Proposition~\ref{thm:equivalence} can be replaced with $\phi(\bdelta,\bgamma,\brho)=||\bgamma||_1+||\brho||_1$.  Proposition~\ref{thm:equivalence} states that by specifying $\bdelta$, the function $\phi(\bdelta,\bgamma,\brho)$, and the normalization constraint in KES$(\hat\bx)$, a decision maker can preserve the trade-off encoded in $\hat\bx$ in the inverse optimization process. The following result shows how KES($\hat\bx$), which was initially proposed for single-objective problems, can be interpreted in the presence of multiple objectives.

\begin{corollary}\label{thm:equivalence_kes}
The linear approximation of the general inverse model~\eqref{eq:liop} with $\mu_1 = 1$ and $\mu_k=0$ for $k=2,\ldots,K$ is equivalent to KES($\hat\bx$).
\end{corollary}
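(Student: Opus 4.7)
The plan is to derive this corollary as a direct specialization of Proposition~\ref{thm:equivalence}. That proposition already establishes the equivalence between (the dual of) LIOP$(\hat\bx)$ and a modified version of KES$(\hat\bx)$ under the settings $\bdelta = \bzero$, $\phi(\bdelta,\bgamma,\brho) = -\bgamma'\beee + \brho'\beee$, and the normalization $\sum_{k=1}^K \mu_k \alpha_k = 1$. So the only task left is to substitute the stated values of $\bmu$ and verify that the resulting formulation coincides with KES$(\hat\bx)$ as originally written.

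First, I would plug $\mu_1 = 1$ and $\mu_k = 0$ for $k = 2, \ldots, K$ into the modified normalization constraint inherited from Proposition~\ref{thm:equivalence}. The sum collapses to $\sum_{k=1}^K \mu_k \alpha_k = \alpha_1$, so the constraint $\sum_{k=1}^K \mu_k \alpha_k = 1$ becomes exactly $\alpha_1 = 1$, which is the original KES normalization~\eqref{eq:kesh_e}. Combined with the choices $\bdelta = \bzero$ and $\phi(\bdelta, \bgamma, \brho) = -\bgamma'\beee + \brho'\beee$ already fixed by Proposition~\ref{thm:equivalence}, the ``modified KES'' thereby reduces to KES$(\hat\bx)$, and the claimed equivalence follows directly.

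The one point that warrants a brief check is that the primal LIOP$(\hat\bx)$ remains well-posed under $\mu_k = 0$ for $k \ge 2$: constraints~\eqref{eq:liop2} degenerate from $\mu_k \epsilon \ge \nabla_{\!\!\bx} f_k(\hat\bx)'(\bx - \hat\bx)$ to $0 \ge \nabla_{\!\!\bx} f_k(\hat\bx)'(\bx - \hat\bx)$, adding purely descent-direction restrictions on the non-primary objectives. These constraints do not involve $\epsilon$, but they still contribute Lagrange terms in the dual, and the derivation of problem~\eqref{eq:liop_dual} in the proof of Proposition~\ref{thm:equivalence} goes through verbatim with these $\mu_k$ values. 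I do not anticipate any substantive obstacle; the corollary is essentially a one-line consequence of Proposition~\ref{thm:equivalence} together with the observation that the weighted sum in the normalization collapses to a single term.
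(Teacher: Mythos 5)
Your argument is correct and is exactly the route the paper intends: the corollary is an immediate specialization of Proposition~\ref{thm:equivalence}, since setting $\mu_1=1$ and $\mu_k=0$ for $k\ge 2$ collapses the normalization $\sum_{k=1}^K \mu_k\alpha_k=1$ to $\alpha_1=1$, recovering constraint~\eqref{eq:kesh_e} of the original KES formulation (the paper omits the proof for precisely this reason). Your additional check that the degenerate constraints $0 \ge \nabla_{\!\!\bx}f_k(\hat\bx)'(\bx-\hat\bx)$ still enter the dual correctly is a worthwhile detail that the paper leaves implicit.
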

Corollary~\ref{thm:equivalence_kes} indicates that KES($\hat\bx$) is an approximate version of the general inverse model~\eqref{eq:iop} where $\mu_1 = 1$ and $\mu_k=0$ for $k=2,\ldots,K$. That is, applying KES($\hat\bx$) to multi-objective optimization problems puts its entire emphasis on the adjustment of the first objective and in general does not take into account the trade-off across multiple objectives. This observation is in fact reinforced by our computational results in the next section. 

\section{Computational results}\label{sec:results}

In this section, we use data from prostate cancer radiation therapy and perform a retrospective study to demonstrate several inverse models presented in this paper. We use historical prostate cancer treatments as input into the inverse models to derive objective function weights that would approximately reproduce the treatments (via forward optimization).  We compare weights generated by the different inverse models and examine how the clinical criteria are affected by the weights. While the experiments constitute a retrospective analysis with historical treatment data, we also comment on clinical implications of our results.

\subsection{Background and motivation}
Radiation therapy is one of the primary methods to treat many cancers including prostate cancer~\citep{Foroudi}.  In prostate cancer treatment, the primary target is the prostate, while the healthy organs to be avoided include the rectum, bladder, and femoral heads (tops of the femur bones). Beams of radiation are delivered to the patient targeting a tumor at their intersection.  The patient's anatomy is discretized into small volume elements called voxels.  A radiation beam can be modeled as a set of small beamlets, whose intensities are optimized.  A radiation therapy treatment planning problem is typically a multi-objective optimization problem and the traditional approach to solving these problems clinically is through a trial-and-error approach to weight selection.

In response to the time consuming, trial-and-error approach to determining the weight values, knowledge-based treatment planning is an increasingly popular approach. Such an approach uses a database of historical treatments to derive a treatment plan for a \emph{de novo} patient (e.g.,~\cite{geom2},~\cite{geom3},~\cite{geom4},~\cite{lee_geom}, and~\cite{bout_geom}).  The idea is that planning effort may be reduced if we can leverage the vast knowledge and experience accumulated through creating clinically accepted plans for similar patients in the past.  Naturally, the historical treatments implicitly encode information on accepted trade-offs between conflicting clinical criteria (i.e., objectives).  Thus, any knowledge-based effort for determining the weight values should consider maintaining the clinical metrics (i.e., objective values) achieved by the historical treatments. 

In the computational results below, we apply different inverse optimization models proposed in this paper so as to impute weight values for a convex quadratic treatment planning problem and demonstrate to what extent the clinical metrics are preserved by each of the models. Specifically, for each historical treatment, which may not be mathematically exactly optimal for the underlying treatment planning problem, the inverse models determine sets of weight values that can reproduce the plan as closely as possible. We use relative trade-off preservation in this application because the clinical metrics, while all measured in terms of dose, may take on values in largely varying ranges (e.g., max dose values in the 70's and penalties for an organ exceeding a dose threshold in the single digits). We then show that weight values that approximate the given treatment under the consideration of trade-off preservation indeed generate a new treatment plan that is considered most similar clinically.

\subsection{Forward optimization problem}
Let $\bx\in\mathbb{R}^n_+$ be a vector of beamlet intensities and $\bD_k\in\mathbb{R}^{m_k \times n}_+$ be a matrix that quantifies the dose deposited to each voxel in structure $k$ from unit intensity of each beamlet, where $m_k$ denotes the number of voxels in structure $k$ and $n$ denotes the number of beamlets.  Let $\mathcal{T}$ index the tumor.  The vector $\bD_k\bx\in\mathbb{R}^{m_k}_+$ quantifies the dose to every voxel in structure $k$.  The radiation therapy treatment planning problem is a forward optimization problem.  We present a simple forward model below.  With each healthy organ $k$, we associate an objective function $f_k(\bx) = ||(\bD_k\bx-\theta_k\beee)_+||_2^2$, where $\theta_k$ denotes a dose threshold for structure $k$, and $(\cdot)_+$ denotes a vector with the operator $\max\{0,\cdot\}$ applied to each component.
\begin{subequations}\label{eq:fop_IMRT}
\begin{align}
\label{eq:fop_IMRT_a}\underset{\bx}{\text{minimize}}       \quad & \sum_{k=1}^K\alpha_k f_k(\bx),\\
\label{eq:fop_IMRT_b}\text{subject to}  \quad & \bl_k \le \bD_k\bx \le \bu_k, \quad k \in \{1,\ldots,K\}\cup
\{\mathcal{T}\},\\
 \label{eq:fop_IMRT_c}& \bzero \le \bx \le \beta\frac{\beee'\bx}{n}.
\end{align}
\end{subequations}
The objective penalizes delivering dose above a certain threshold for structure $k$.  Formulation~\eqref{eq:fop_IMRT} can be written as a convex program by rewriting $f_k(\bx)$ as $f_k(\bz) = ||\bz||_2^2$, where $\bz$ is an auxiliary decision vector satisfying $\bz \ge \bD_k\bx-\theta_k\beee$ and $\bz \ge \bzero$.  We use hard constraints~\eqref{eq:fop_IMRT_b} to bound the lower and upper doses to organ $k$ and the tumor $\mathcal{T}$.  Constraint~\eqref{eq:fop_IMRT_c} is a stylized approach to discouraging individual beamlets from significantly exceeding the mean beamlet intensity.  In the experiments below, five healthy organs-at-risk (OAR) comprise the objective function: the bladder, rectum, left and right femoral heads, and a ring of healthy tissue around the tumor (used to encourage conformity of the dose around the target). We choose $\beta=2$, $\bl_k=\bzero$ for all healthy organs, $\bl_{\mathcal{T}}=78\beee$, $\bu_k=81.9\beee$ for all healthy organs and the tumor, $\theta_k=30$ for the left and right femoral heads, and  $\theta_k=50$ for the other healthy organs, based on a protocol at Princess Margaret Cancer Centre in Toronto, Canada.

\subsection{Impact of different inverse optimization models on trade-off preservation}\label{sec:proof}
We used 24 historical treatments delivered at Princess Margaret Cancer Centre for our input $\hat\bx$ vectors.
Note that a clinical treatment plan is typically designed via the repeated solution of a nonconvex optimization problem with a larger number of objectives. Thus, the historical treatment plans are not likely to be Pareto optimal for problem~\eqref{eq:fop_IMRT}.  The goal of inverse optimization here is to find a weight vector such that problem~\eqref{eq:fop_IMRT} generates a treatment plan that is similar to the historical one, in terms of the achieved clinical metrics (i.e., objective values).

On average across the 24 patients,  $n=409$ and $m_k=8,157$, resulting in $6,313$ variables and $14,770$ constraints (after including auxiliary variables) of the forward problem~\eqref{eq:fop_IMRT}. For each of the treatments $\hat\bx$, we derived weights using KES($\hat\bx$), IOP$_r$($\hat\bx$), LIOP$_r$($\hat\bx$), and the SLP algorithm. For KES($\hat\bx$), we solved five different instances, corresponding to five different ways of normalizing the weight vector (i.e., $\alpha_k = 1, k = 1, \ldots, 5$). For the SLP algorithm, parameters and termination criteria were chosen based on~\cite{nonlin_alg2} (e.g., terminating when the $l_2$ norm difference between two consecutive iterates is less than 0.001) and $\hat\bx$ was used as an initial solution. All the models were solved using CPLEX 12.3 on a computer with a 3.07 GHz 12-core CPU and 32 GB of RAM.

\begin{table}[h]
\begin{center}
\caption{Comparing the model of~\citet{Keshavarz} with different weights fixed to one for patient \#1.}\label{tab:comparison_KES}
\resizebox{14cm}{!}{\begin{tabular}{rrrrrrrrrrrrrrr}
\hline
        &\multicolumn{2}{c}{KES1}&&\multicolumn{2}{c}{KES2}&&\multicolumn{2}{c}{KES3}&&\multicolumn{2}{c}{KES4}&&\multicolumn{2}{c}{KES5}\\\cmidrule{2-3}\cmidrule{5-6}\cmidrule{8-9}\cmidrule{11-12}\cmidrule{14-15}
OAR     &$\balpha^*$&	$\frac{f_k(\bx^*)}{f_k(\hat\bx)}$	&& $\balpha^*$&	 $\frac{f_k(\bx^*)}{f_k(\hat\bx)}$&&$\balpha^*$&	$\frac{f_k(\bx^*)}{f_k(\hat\bx)}$	 &&$\balpha^*$&	 $\frac{f_k(\bx^*)}{f_k(\hat\bx)}$	&&$\balpha^*$&	 $\frac{f_k(\bx^*)}{f_k(\hat\bx)}$	\\\hline
Blad	&	0.973	&	0.653	&&	0.003	&	0.837	&&	0.000	&	0.782	 &&	0.000	 &	 0.811	&&	 0.001	&	0.823	\\
Rect	&	0.004	&	1.000	&&	0.865	&	0.823	&&	0.000	&	0.949	&&	0.000	 &	0.963	 &&	 0.002	&	0.972	\\
LFem	&	0.000	&	1.143	&&	0.005	&	0.232	&&	0.992	&	0.000	&&	0.004	 &	0.001	 &&	 0.000	&	0.023	\\
RFem	&	0.000	&	0.671	&&	0.004	&	0.177	&&	0.004	&	0.002	&&	0.992	 &	0.000	 &&	 0.000	&	0.050	\\
Ring	&	0.023	&	0.929	&&	0.123	&	0.614	&&	0.004	&	0.412	&&	0.004	 &	0.411	 &&	 0.997	&	0.404	\\
\hline\multicolumn{15}{l}{{\footnotesize KES\# refers to KES with \#-th weight fixed to one.  Weights were renormalized so that they add up to one.}}
\end{tabular}}
\end{center}
\end{table}
Table~\ref{tab:comparison_KES} summarizes the results of applying the five variations of the KES($\hat\bx$) model to patient \#1. It can be seen that the imputed weight values are heavily dependent on which objective was used for normalization in KES($\hat\bx$). The weights in Table~\ref{tab:comparison_KES} suggest that the KES model gives the vast majority of the weight to the objective whose weight is used in the normalization constraint.  The results were similar for the remaining patients. For each instance, $f_k(\bx^*)/f_k(\hat\bx)$ denotes the ratio of the objective value achieved by $\balpha^*$ to the objective value associated with $\hat\bx$ for objective $k$.  In Table~\ref{tab:comparison_KES}, we see that there is no well-defined pattern in the ratios. The amount of improvement varies from objective to objective and from model to model. Furthermore, KES1 provides an example where the dose on some objectives stays the same or even increases while the dose to other organs decreases. Overall, the KES model finds a weight vector that puts an emphasis only on a single objective - whichever was used for normalization - and the resulting treatment plans seem to vary significantly.

\begin{table}[h]
\begin{center}
\caption{Comparison of the results from IOP$_r$, LIOP$_r$, and SLP algorithm for three patients.}\label{tab:comparison}
\resizebox{14cm}{!}{\begin{tabular}{lrrrrrrrrrrrrrrrrr}
\hline
	&	&\multicolumn{4}{c}{IOP$_r$} &&	\multicolumn{4}{c}{LIOP$_r$} &&	 \multicolumn{4}{c}{SLP} && FOP\\\cmidrule{3-6}\cmidrule{8-11}\cmidrule{13-16}\cmidrule{18-18}
\multirow{ 2}{*}{Pat}	&	\multirow{ 2}{*}{OAR}&	\multirow{ 2}{*}{$\epsilon^*$}	&	 \multirow{ 2}{*}{$\balpha^*$}	 &	 \multirow{ 2}{*}{$\frac{f_k(\bx^*)}{f_k(\hat\bx)}$}	&	 Time	 &&	 \multirow{ 2}{*}{$\epsilon^*$} & \multirow{ 2}{*}{$\balpha^*$} & \multirow{ 2}{*}{$\frac{f_k(\bx^*)}{f_k(\hat\bx)}$} & Time && \multirow{ 2}{*}{$\epsilon^*$} & \multirow{ 2}{*}{$\balpha^*$} & \multirow{ 2}{*}{$\frac{f_k(\bx^*)}{f_k(\hat\bx)}$}	&	 Time && Time\\
	&		&		&	 	     &			&(s)&&		&	    	     &            &(s)&&		 &	 		 &			&(s) & &(s)\\\hline
\multirow{5}{*}{1}
&	Blad	& \multirow{5}{*}{0.812}	&	0.014	&	0.812	&	\multirow{5}{*}{1582}	 && \multirow{5}{*}{0.769}	&	0.014	&	0.813	&	 \multirow{5}{*}{167}	 && \multirow{5}{*}{0.813}	 &	 0.014	&	0.815	&	 \multirow{5}{*}{\begin{tabular}{r}662\\$[$30$]$\end{tabular}} && \multirow{5}{*}{371}\\
	&	Rect	&	&	0.935	&	0.812	&	&&	&	0.936	&	0.812	&		 &&		 &	 0.935	&	0.812	&		 &&\\
	&	LFem&	&	0.002	&	0.812	&	&&	&	0.001	&	0.905	&		 &&		 &	 0.002	&	0.746	&		 &&\\
	&	RFem&	&	0.000	&	0.809	&	&&	&	0.001	&	0.695	&		 &&		 &	 0.000	&	0.932	&		 &&\\
	&	Ring	&	&	0.049	&	0.812	&	&&	&	0.048	&	0.816	&		 &&		 &	 0.049	&	0.813	&		 &&\\\hline
\multirow{5}{*}{2}
&	Blad	&\multirow{5}{*}{0.803}	&	0.932	&	0.803	&	\multirow{5}{*}{1384}	 &&\multirow{5}{*}{0.748}	&	0.937	&	0.802	&	 \multirow{5}{*}{82}	 &&\multirow{5}{*}{0.803}	 &	 0.933	&	0.803	&	 \multirow{5}{*}{\begin{tabular}{r}1000\\$[$56$]$\end{tabular}} &&\multirow{5}{*}{258}\\
	&	Rect	&		&	0.002	&	0.803	&		&&		&	0.008	&	0.754	&		 &&		 &	 0.002	&	0.802	&							 &&\\
	&	LFem	&		&	0.012	&	0.803	&		&&		&	0.012	&	0.771	&		 &&		 &	 0.011	&	0.846	&							 &&\\
	&	RFem	&		&	0.018	&	0.803	&		&&		&	0.015	&	0.965	&		 &&		 &	 0.018	&	0.808	&							 &&\\
	&	Ring	&		&	0.036	&	0.803	&		&&		&	0.028	&	0.825	&		 &&		 &	 0.036	&	0.803	&								 &&\\\hline
\multirow{5}{*}{3}
&	Blad	&	\multirow{5}{*}{0.712} &	0.471	&0.712	&\multirow{5}{*}{764}	 &&\multirow{5}{*}{0.617}	 &0.456	&0.713	&\multirow{5}{*}{33}	 &&\multirow{5}{*}{0.712}	&0.474	&0.712	 &\multirow{5}{*}{\begin{tabular}{r}435\\$[$30$]$\end{tabular}} &&\multirow{5}{*}{168}\\
	&	Rect	&		&0.412	&0.712	&		&&		&0.438	&0.709	&	&&		 &0.409	&0.712	& &&\\
	&	LFem&		&0.008	&0.712	&		&&		&0.012	&0.525	&	&&		 &0.007	&0.765	& &&\\
	&	RFem&		&0.033	&0.712	&		&&		&0.027	&0.889	&	&&		 &0.033	&0.689	& &&\\
	&	Ring	&		&0.076	&0.712	&		&&		&0.067	&0.746	&	&&		 &0.077	&0.709	& &&\\\hline	
\multicolumn{16}{l}{{\footnotesize [$\cdot$] denotes the number of iterations.}}\\
\end{tabular}}
\end{center}
\end{table}
Table~\ref{tab:comparison} shows the results from applying the IOP$_r$ model, LIOP$_r$ model, and SLP algorithm to three example patients.  For the SLP algorithm, the solution time is the total running time through all iterations.  As expected, the component-wise ratio obtained by the IOP$_r$ model is equivalent to $\epsilon^*$ unless the corresponding weight is zero in which case the ratio is at most $\epsilon^*$ (see Corollary~\ref{prop:relationship}).  The practical interpretation of this result is that although the ratio $f_k(\bx^*)/f_k(\hat\bx)=\epsilon^*$ may be achievable, the objective value can be further decreased (i.e., improved) without any sacrifice.  Note that preserving the trade-off using the IOP$_r$ model comes at a higher computational cost compared to the LIOP$_r$ model which approximately maintains the relative preservation.  Also note that $\epsilon^*$ from LIOP$_r$ is less than $\epsilon^*$ from IOP$_r$ as expected, because LIOP$_r$ is an outer approximation to IOP$_r$.  Finally, the SLP algorithm strikes a middle ground between the LIOP$_r$ and IOP$_r$ models in terms of trade-off preservation and computational efficiency.  

Figure~\ref{fig:scatterplot} shows the performance of KES, IOP$_r$, LIOP$_r$ and SLP across all 24 patients in terms of the solution time and trade-off preservation, the latter of which is quantified using the variance of the component-wise ratios $f_k(\bx^*)/f_k(\hat\bx)$ across $k$.  Variances less than $2^{-14} (<0.0001)$ were considered to represent perfect trade-off preservation so their values were rounded to zero.  For the KES results for each patient, we report the performance of the model with the normalization constraint $\alpha_{k'} = 1$, where $k'$ is the structure with the highest inverse weight determined by the IOP$_r$ model.  For example, in the case of patient \#1, the IOP$_r$ model shows that the rectum weight is the highest, so we use the inverse weights derived from the KES model that employs the normalization constraint $\alpha_2 = 1$ (the rectum is the second objective). In Figure~\ref{fig:scatterplot}, we can see that the KES model can generally be solved quickly but exhibits high variance in the objective function ratios, while the IOP$_r$ model generally has the longest solution time but preserves the initial relative trade-off vector well.  The three patients with variance in the range of $2^{-6}$ to $2^{-4}$ using the IOP$_r$ model had at least one zero-weighted objective. Finally, the LIOP$_r$ model and the SLP algorithm by extension strike a balance between the KES and IOP$_r$ models.  Table~\ref{tab:comparison_average} summarizes the numerical results across all patients, reinforcing the trade-offs between the models.
\begin{figure}[h]\centering
\includegraphics[width=120mm]{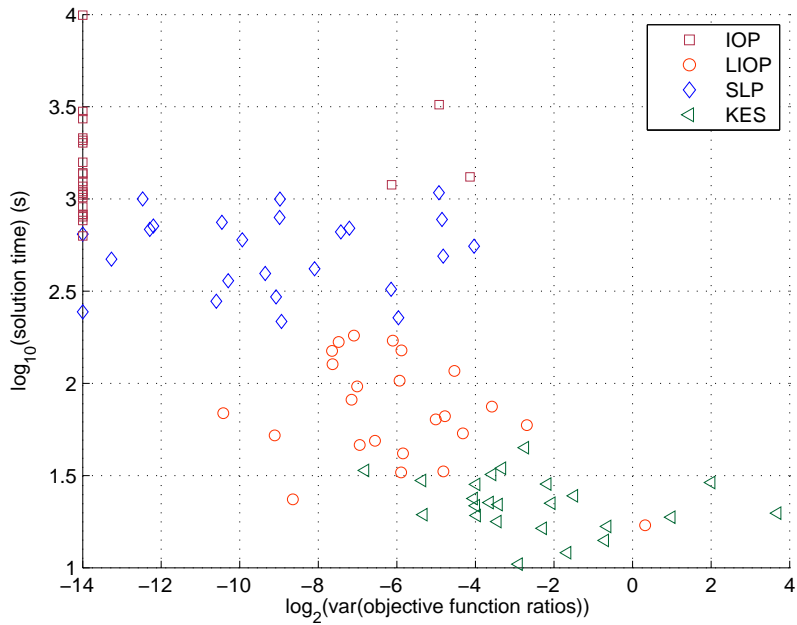}
\caption{Trade-off preservation and solution time between different inverse models}\label{fig:scatterplot}
\end{figure}
\begin{table}[h]
\begin{center}
\caption{Comparison of results averaged across 24 patients}\label{tab:comparison_average}
\begin{tabular}{crrrrrrr}
\hline
			&	IOP$_r$	&&	SLP		&&	LIOP$_r$	&&	KES	 \\\cmidrule{2-2}\cmidrule{4-4}\cmidrule{6-6}\cmidrule{8-8}
Var$\left(\frac{\bff(\bx^*)}{\bff(\hat\bx)}\right)$	&	0.004	&&	0.009	&&	0.076	&&	0.938	 \\
$|\epsilon^{{\text{{\tiny IOP}}}}-\epsilon^*|$		&	0	    &&	0.001	&&	0.050	&&	N/A	 \\
$||\balpha^{{\text{{\tiny IOP}}}}-\balpha^*||_2$ 	&	0	    &&	0.007	&&	0.077	&&	0.281	 \\
Solution time (s)			    					&	1,807	&&	570		&&	84		&&	23	\\\hline
\multicolumn{8}{l}{\footnotesize KES fixes the weight determined highest by the IOP$_r$ model to one.}\\
\multicolumn{8}{l}{\footnotesize  $\bx^{{\text{{\tiny IOP}}}}$, $\epsilon^{{\text{{\tiny IOP}}}}$ and $\balpha^{{\text{{\tiny IOP}}}}$ denote the optimal $\bx$, $\epsilon$ and $\balpha$ obtained by}\\
\multicolumn{8}{l}{\footnotesize  the IOP$_r$ model, respectively.}
\end{tabular}
\end{center}
\end{table}		

\subsection{Impact of different inverse optimization models on clinical metrics}\label{sec:clinical}
While Tables~\ref{tab:comparison_KES}, \ref{tab:comparison}, and \ref{tab:comparison_average} reinforce our theoretical results about the lack of trade-off preservation leading to inconsistent perturbations in objective space, we next comment on how these results translate into the clinical context for radiation therapy. In particular, we examine the impact of trade-off preservation on the resulting treatment plan's ability to replicate the historical treatment plan with respect to key clinical metrics.  

We first compared treatment plans generated by the KES weights to the clinical treatment plan for patient \#1.  We use a dose volume histogram (DVH), which shows for each plan the fractional volume of an organ that receives a certain dose or higher, to compare the KES and clinical plans.  Figure~\ref{fig:DVH_KES1_clinical} shows that while the solution using the KES1 weights (dashed lines) has significantly improved the bladder dose (dashed green), the rectum dose is generally worse, violating the clinical acceptability requirement of Princess Margaret Cancer Centre that less than 50\% of the volume receive more than 50 Gy. On the other hand, Figure~\ref{fig:DVH_IOP_clinical} shows that the treatment plan generated by the IOP$_r$ weights for patient \#1 is much more similar to the clinical plan over all the four organs at risk. Given that the bladder, rectum, and the left/right femoral head objectives were to minimize radiation dose exceeding 50 Gy, 50 Gy, and 30 Gy, respectively, the DVHs of the IOP$_r$ plan show a similar dose reduction across the organs compared to the clinical DVHs above the dose thresholds. The results were similar for the remaining patients.
\begin{figure}[h]\centering
\includegraphics[width=130mm]{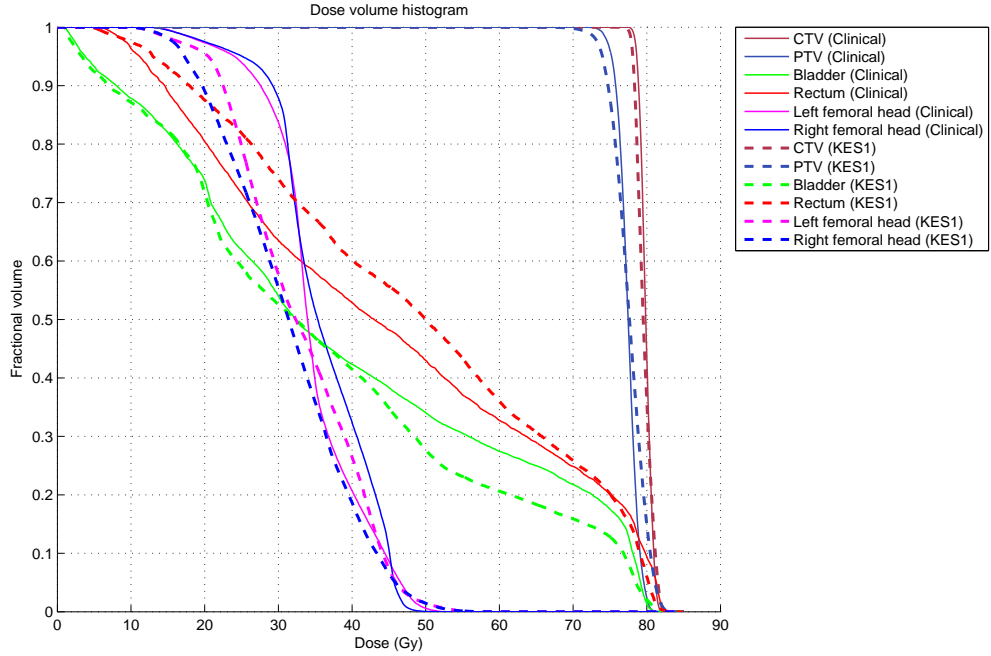}
\caption{DVHs of the clinical plan and KES1 weights for patient \#1}\label{fig:DVH_KES1_clinical}
\end{figure}
\begin{figure}[h!]\centering
\includegraphics[width=130mm]{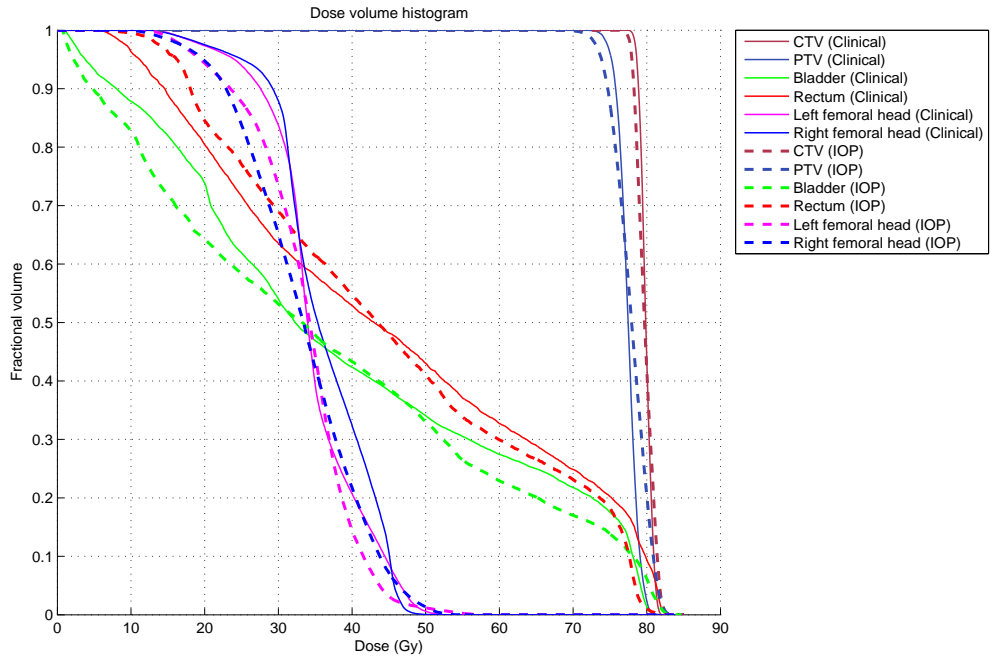}
\caption{DVHs of the clinical plan and IOP$_r$ weights for patient \#1}\label{fig:DVH_IOP_clinical}
\end{figure}
		
The ability to accurately replicate a clinical plan, including the relevant clinical trade-offs encoded in the plan, is an important feature of any knowledge-based, automated planning system. Such approaches are becoming increasingly popular as a means to efficiently create treatments for new patients. For example, given a new patient, the system can identify a ``similar'' historical patient (e.g., anatomically) and produce an appropriate weight vector using our trade-off preserving inverse optimization model. Alternatively, one can use a prediction model, trained using inversely optimized weight vectors and anatomy from historical patients, to generate an appropriate weight vector for a new patient~\citep{geom2,geom3,geom4,lee_geom,bout_geom}. Such an approach is more likely to produce plans with acceptable clinical performance, assuming the historical plans were acceptable.

\section{Conclusion}
Trade-offs between objectives are critical in multi-objective optimization.  In this paper, we developed a new approach to inverse convex multi-objective optimization that explicitly considers preserving trade-off encoded in a given initial solution.  Our approach is general and maintains the complexity of the forward problem (i.e., is convex). The notion of trade-off preservation is generally applicable to multi-objective optimization problems where a decision maker's intentions are encoded in the objective values of a given solution, and provides a connection with a relative or absolute optimality gap with respect to the given solution.  We formulated a general inverse convex optimization framework that encompasses many of the inverse models in the literature and demonstrated how an inverse model can be adjusted to preserve the initial trade-off.  We compared various inverse optimization models computationally using data from prostate cancer radiation therapy and demonstrated the ability of our inverse model to preserve the clinical quality of a historical treatment using imputed weights, which is important in the design of next generation automated treatment planning systems.

\section*{Acknowledgements} The authors would like to thank Dr. Michael B. Sharpe and Dr. Tim Craig at Princess Margaret Cancer Centre for providing the clinical data and expertise.

\bibliography{InvConvex}
\bibliographystyle{chicago}

\end{document}